\newcommand{\eps}{\varepsilon}
\newcommand{\N}{{\mathbb N}}
\newcommand{\F}{\mathfrak{F}}
\newcommand{\flim}{\lim\limits_\F }
\newcommand{\Id}{\mathrm{Id}}
\newcommand{\ADM}{\mathrm{ADM}}
\theoremstyle{plain}
\newtheorem{theorem}{Theorem}[section]
\newtheorem{lemma}[theorem]{Lemma}
\newtheorem{corollary}[theorem]{Corollary}
\newtheorem{proposition}[theorem]{Proposition}
\newtheorem{example}[theorem]{Example}
\theoremstyle{remark}
\newtheorem{remark}[theorem]{Remark}
\theoremstyle{definition}
\newtheorem{definition}[theorem]{Definition}
\numberwithin{equation}{section}
\begin{document}
\title[Norms of partial sums for an $\F$-basis]{Norms of partial sums operators for a basis with respect to a filter}
\author[Kadets]{V. Kadets}

\address[Kadets]{ \href{http://orcid.org/0000-0002-5606-2679}{ORCID: \texttt{0000-0002-5606-2679}} {School of Mathematical Sciences, Holon Institute of Technology (Israel)}}
\email{kadetsv@hit.ac.il}

\author[Manskova]{M. Manskova}

\address[Manskova]{ \href{http://orcid.org/0009-0005-4012-4790}{ORCID: \texttt{0009-0005-4012-4790}} {Graz University of Technology (Austria)
}}
\email{maryna.manskova@tugraz.at}

\subjclass[]{46B20; 46B15; 54A20}
\keywords{Banach space; filter; statistical convergence; filter bases in Banach spaces; generalised bases}

\begin{abstract}
Basis of a Banach space with respect to a filter $\F$ on $\N$ ($\F$-basis for short)  is a generalization of basis, where the ordinary convergence of series is substituted by convergence of partial sums with respect to the filter $\F$. We study the behavior of the norms of partial sums operators for an $\F$-basis, depending on the filter and on the space. One of the central results is:

  The following properties of a sequence $(a_n)_{n \in \N} \subset (1, \infty)$ are equivalent:
\begin{enumerate}
\item[(i)] $\sum_{n \in \N} a_n^{-1} = \infty$.

\item[(ii)] There are a free filter $\F$ on $\N$, an infinite-dimensional Banach space $X$ and an $\F$-basis  $(u_k)$ of $X$ such that the norms of the partial sums operators with respect of $(u_k)$ are equal to the corresponding $a_n$.
\end{enumerate}

\end{abstract}

\thanks{The project was initiated in 2021-2022 as a M.Sc. research project of the second author under the supervision of the first author when both of the authors were affiliated with V.N.Karazin Kharkiv National University. The project was interrupted because of the Russian invasion to Ukraine and resumed three years later.}

\maketitle


\section{Introduction}

Below, the letters $X,Y,E$ are reserved for real infinite-dimensional Banach spaces, $X^*$ stands for the dual Banach space to $X$, $L(X,Y)$ denotes the Banach space of all continuous linear operators $T: X \to Y$, and $L(X) :=L(X,X)$. We use the standard terminology and notation from Functional Analysis like, for example, in \cite{kadbook}. In particular \cite[Section 16.1]{kadbook} contains some basic information about filters and filter convergence, see also the introductory part of Section \ref{sec_3} below.

A sequence $(e_n)_{n \in \mathbb N}$ in $X$ is said to be a Schauder basis (or just a basis) if for every $x \in X$ there is a unique sequence of scalars $(a_n)_{n \in \mathbb N}$  such that $ \sum_{k=1}^\infty a_k e_k = x$. Denote by $e_n^* (x)$ the coefficients of the decomposition of $x$ in the basis $\{{e_n}\}_1^\infty$, and by $S_n (x)$ the $n$-th partial sum of the decomposition, i.e., $S_n (x) = \sum\limits_{k=1}^n {e_k^* (x)e_k}$. It is easy to see that  $e_n^*$ are linear functionals on $X$ (they are called \emph{coordinate functionals}), and $S_n$ are linear operators (called \emph{partial sums operators}) acting from $X$ into $X$.

A highly non-trivial classical result due to S.~Banach says that the coordinate functionals and the partial sum operators are continuous (i.e. $e_n^*\in X^*$ and $S_n \in L(X)$) and, moreover, 
\begin{equation} \label{intr-eq1}
\sup_n \|{S_n}\| = C < \infty, 
\end{equation}
see, for example, \cite[Section 10.5.2]{kadbook}.  

Basis with respect to a filter is a generalization of basis, where the ordinary convergence of series is substituted by convergence of partial sums with respect to a filter.

\begin{definition}[\cite{gakad}] \label{defF-bas}
 Let $\mathfrak F$ be a free filter on $\mathbb N$. A sequence $(e_n)_{n \in \mathbb N}$ in a Banach space $X$ is said to be an $\mathfrak F$-basis if for every $x \in X$ there is a unique sequence of scalars $(a_n)_{n \in \mathbb N}$  such that $\mathfrak F\)-\(\lim_n \sum_{k=1}^n a_k e_k = x$.
\end{definition}

Like in the particular case of Schauder basis, in the case of $\mathfrak F$-basis the corresponding coordinate functionals $e_n^*: x \mapsto a_n$ and the partial sums operators $S_n: x \mapsto \sum_{k=1}^n a_k e_k$ are linear.
It is an open question, asked explicitly by the first author in 2011 \cite{Kad-prob}, whether the coordinate functionals $x \mapsto a_n$ are necessarily continuous. In fact the question arose already in \cite{gakad}, where it was asked in the following equivalent form ``Is it true that every $\mathfrak F$-basis ought to be a minimal system?''.

The problem was addressed in several papers \cite{kochanek2011, kochanek2012, kania}, which recently leaded to the positive answer for analytic filters \cite[Theorem A]{RKS23}.  So, although for general filters the problem remains to be unsolved, for all ``good'' filters that are defined by a kind of explicit formula, the answer is known to be positive. Moreover, \cite[Theorem B]{RKS23} says that if an $\F$-basis  $(e_n)_{n \in \N}$ is a minimal system than there is an analytic filter $\tilde\F$ such that $(e_n)_{n \in \N}$ is an $\tilde\F$-basis.

As we already remarked, the Banach's theorem not only states the continuity of $S_n$ for a Schauder basis, but also the uniform boundedness of the sequence $(S_n)$. The latter result is not true for $\mathfrak F$-bases in general because by the pointwise convergence criterion \cite[Theorem 2 of Section 10.4.2]{kadbook} under the condition \eqref{intr-eq1} the $\mathfrak F$-basis becomes automatically a Schauder basis (for an $\mathfrak F$-basis $(e_n)_{n \in \mathbb N} \subset X$ the sequence $(S_n)$ converges pointwise to the identity operator $\Id$ on the linear span of $(e_n)_{n \in \mathbb N}$ which is dense in $X$;  together with \eqref{intr-eq1} this gives the pointwise convergence to $\Id$ on the whole space $X$).

The paper is devoted to the following natural question: given a free filter  $\mathfrak F$, what restrictions on the norms of partial sums of an $\mathfrak F$-basis one gets? Of course, when we address this question, we assume that $S_n$ are continuous, otherwise we cannot speak about their norms.

The structure of the paper is as follows. In Section \ref{sec2} we analyze those sequences $(a_n)$ of positive reals that can serve as norms of functionals   $x_n^* \in X^*$ such that the zero element of $X^*$ is a cluster point of the sequence $(x_n^*)$ in the $w^*$ topology $\sigma(X^*, X)$. This class of sequences $(a_n)$ depends on the space $X$. After that, in Section \ref{sec_3}, we turn to  those sequences $(a_n)$ of positive reals that can serve as norms of functionals $x_n^* \in X^*$  whose $w^*$-limit with respect to a given filter $\F$ is equal to zero. This class of sequences $(a_n)$ depends both on the space $X$ and on the filter $\F$. After this preparatory work is done, in Section \ref{sec_4} we present the main results on the norms of partial sums of an $\mathfrak F$-basis. The last section ``The role of summable filters'' addresses a special class of filters which happens to be important for our considerations.


\section{$(w^*, X^*)$-acceptable sequences} \label{sec2}

Let $X$ be an infinite-dimensional Banach space. Following \cite{kaleor} we call a sequence $(a_n)_{n \in \N}$ of positive reals \emph{$X$-acceptable} if there is a sequence $(x_n)_{n \in \N} \subset X $, $\|x_n\|=a_n$ for which zero is a weak cluster point. In \cite{Kad-cyl} (see Theorem \ref{theo-Kad-cyl1} below) it was proved that if $X$ is a Hilbert space then $X$-acceptability of $\bar a = (a_n)_{n \in \N}$ is equivalent to $\sum_{n \in \N} a_n^{-2}=\infty$. For $X = c_0$
(or more generally for spaces where $c_0$ is finitely representable) $X$-acceptability of $\bar a$ is equivalent to $\sum_{n \in \N}
a_{n}^{-1}=\infty$. So $X$-acceptability really depends on $X$.

For the current research a similar notion of $(w^*, X^*)$-acceptability is of crucial importance.

\begin{definition} \label{def-w*accept}
A sequence $(a_n)_{n \in \N}$ of positive reals is said to be \emph{$(w^*, X^*)$-acceptable} if there is a sequence $(x_n^*)_{n \in \N} \subset X^* $, $\|x_n^*\|=a_n$ for which zero element of $X^*$ is a cluster point in the $w^*$ topology $\sigma(X^*, X)$.
\end{definition}

In the case of reflexive space $E$, where $(E^*)^* = E$, $(w^*, E^*)$-acceptability is equivalent to $X$-acceptability for $X = E^*$, which enables us for reflexive spaces to use in the setting of $(w^*, E^*)$-acceptability the results from \cite{Kad-cyl} demonstrated (without using this notation) for $X$-acceptability. There is one evident connection more that follows from the fact that the topology $\sigma(E^*, E)$ is weaker than $\sigma(E^*, E^{**})$:

\begin{proposition} \label{propX-vers_W*}
Let $E$ be a Banach space and $(a_n)_{n \in \N}$ be an $E^*$-acceptable sequence. Then $(a_n)_{n \in \N}$ is $(w^*, E^*)$-acceptable.
\end{proposition}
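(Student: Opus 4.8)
The plan is to reuse the very same witnessing sequence, exploiting only that $\sigma(E^*,E)$ is a coarser topology than $\sigma(E^*,E^{**})$. Suppose $(a_n)_{n\in\N}$ is $E^*$-acceptable; by definition there is a sequence $(x_n^*)_{n\in\N}\subset E^*$ with $\|x_n^*\|=a_n$ for which the zero element of $E^*$ is a cluster point in the weak topology $\sigma(E^*,E^{**})$. I claim that this same sequence $(x_n^*)_{n\in\N}$ witnesses $(w^*,E^*)$-acceptability of $(a_n)_{n\in\N}$, i.e. that $0$ is also a cluster point of $(x_n^*)_{n\in\N}$ in the $w^*$ topology $\sigma(E^*,E)$; the norms $\|x_n^*\|=a_n$ of course do not change when we pass from one topology to another, so nothing else needs to be arranged.

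The only thing to verify is the following elementary general fact: if $\tau_1\subseteq\tau_2$ are topologies on a set $Z$ and $z\in Z$ is a cluster point of a sequence $(z_n)_{n\in\N}$ with respect to the finer topology $\tau_2$, then $z$ is a cluster point of $(z_n)_{n\in\N}$ with respect to the coarser topology $\tau_1$ as well. Indeed, being a cluster point of a sequence means that for every neighbourhood $U$ of $z$ the set $\{n\in\N : z_n\in U\}$ is infinite; since every $\tau_1$-neighbourhood of $z$ is in particular a $\tau_2$-neighbourhood of $z$, the required property for $\tau_1$ is an immediate consequence of the one for $\tau_2$. Applying this with $Z=E^*$, $\tau_1=\sigma(E^*,E)$, $\tau_2=\sigma(E^*,E^{**})$ — which are nested because of the canonical isometric embedding $E\hookrightarrow E^{**}$ — completes the argument.

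There is essentially no obstacle to overcome here, which is why the statement was flagged as ``evident'' in the text: it is purely a matter of comparing the two topologies and observing that the extremal quantity we care about (the norm of each term) is unaffected. The one point worth a line of care is simply to state the cluster-point notion in a topology-agnostic way (``every neighbourhood contains $z_n$ for infinitely many $n$'') so that the implication between topologies is transparent.
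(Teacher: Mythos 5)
Your argument is correct and is exactly the paper's route: the paper states the proposition as an evident consequence of the fact that $\sigma(E^*,E)$ is weaker than $\sigma(E^*,E^{**})$, which is precisely the topology-comparison observation you spell out (a cluster point in a finer topology is a cluster point in any coarser one, with the same witnessing sequence and unchanged norms). Nothing is missing.
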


So, let us list the results about $(w^*, E^*)$-acceptability that follow directly from \cite{Kad-cyl}.

\begin{theorem}[{\cite[Theorem 3.1 and Proposition 2.2]{Kad-cyl}}]
\label{theo-Kad-cyl1}  

Let $H$ be an infinite-dimensional separable Hilbert space (say, $H=\ell_2$), and $(e_n) \subset H$ be an orthonormal basis of $H$.The following properties of a sequence $(a_n) \subset {\mathbb{R}}^+$ are equivalent:

\begin{enumerate}
\item[(i)] $(a_n)$ is $(w^*, H^*)$-acceptable;

\item[(ii)] The sequence $(a_n e_n)$ has 0 as a weak cluster point.

\item[(iii)] $\sum_1^\infty a_n^{-2}=\infty$.
\end{enumerate}
\end{theorem}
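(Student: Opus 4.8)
The plan is to reduce the three conditions to the single notion of $H$-acceptability and then run a cycle of implications. Since $H$ is a Hilbert space, the Riesz map $J\colon H\to H^*$, $\langle Jx,h\rangle=\langle x,h\rangle$, is an isometric isomorphism and, at the same time, a homeomorphism of $(H,\sigma(H,H))$ onto $(H^*,\sigma(H^*,H))$; pushing a candidate sequence through $J^{-1}$ shows that (i) is equivalent to ``$(a_n)$ is $H$-acceptable'' (equivalently, since $H$ is reflexive, $(w^*,H^*)$-acceptability coincides with $H^*$-acceptability). Granting this, (i)$\Leftrightarrow$(iii) is \cite[Theorem~3.1]{Kad-cyl} and (ii) is a concrete witness for it, so formally nothing new is needed; nonetheless I would reprove the statement, closing the cycle (iii)$\Rightarrow$(ii)$\Rightarrow$(i)$\Rightarrow$(iii). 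Here (ii)$\Rightarrow$(i) is immediate, since $\|a_n e_n\|=a_n$ makes $(a_n e_n)$ an $H$-acceptable witness.

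For (iii)$\Rightarrow$(ii) I would argue by a ``mass count''. A basic weak neighbourhood of $0$ is $U=\{z:|\langle z,y_l\rangle|<\eps,\ l=1,\dots,p\}$. If $a_n e_n\notin U$ for all but finitely many $n$, then each such $n$ admits $l$ with $a_n^2|\langle e_n,y_l\rangle|^2\ge\eps^2$, hence $\sum_{l=1}^p|\langle e_n,y_l\rangle|^2\ge\eps^2 a_n^{-2}$; summing over these $n$ and swapping the finite sum over $l$ with the sum over $n$ gives $\sum_{l=1}^p\|y_l\|^2\ge\eps^2\sum_n a_n^{-2}=\infty$, a contradiction. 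Thus every weak neighbourhood of $0$ contains infinitely many $a_n e_n$, which is (ii). (Read the other way, this also yields (ii)$\Leftrightarrow$(iii) directly: if $\sum a_n^{-2}<\infty$ then $f:=(a_n^{-1})_n\in\ell_2=H$ and $\langle a_n e_n,f\rangle=1$ for all $n$, so $\{z:|\langle z,f\rangle|<1\}$ misses the whole sequence.)

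The substantial step is (i)$\Rightarrow$(iii), which I would prove in contrapositive form: \emph{if $\sum_n a_n^{-2}=:\Sigma<\infty$, then for every sequence of unit vectors $(v_n)\subset H$ there are a finite-dimensional $F\subset H$ and $c>0$ with $\|P_F v_n\|\ge c\,a_n^{-1}$ for all but finitely many $n$}. This is enough: for an arbitrary witness $x_n=a_n v_n$ and an orthonormal basis $y_1,\dots,y_p$ of $F$ one has $\max_l|\langle x_n,y_l\rangle|=a_n\max_l|\langle v_n,y_l\rangle|\ge p^{-1/2}a_n\|P_F v_n\|\ge p^{-1/2}c$ for all but finitely many $n$, so $0$ is not a weak cluster point of $(x_n)$. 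To construct $F$ I would use the positive trace-class operator $S:=\sum_n a_n^{-2}\,v_n\otimes v_n$ (with $\mathrm{tr}\,S=\Sigma$) together with the pointwise inequality $\langle S v_n,v_n\rangle\ge a_n^{-2}$. Diagonalising $S$ in an orthonormal basis $(e_i)$ with eigenvalues $\mu_1\ge\mu_2\ge\cdots$, and letting $P_m$ be the projection onto $\mathrm{span}(e_1,\dots,e_m)$, one obtains $\|P_m v_n\|^2\ge(a_n^{-2}-\mu_{m+1})/\mu_1$, which controls the (finitely many) indices with $a_n^{-2}\gtrsim\mu_{m+1}$. For the remaining indices, along which $v_n$ escapes every fixed finite block, a greedy extraction gives a subsequence $(n_k)$ and an orthonormal sequence $(g_k)$ with $\|v_{n_k}-g_k\|\le\delta\,a_{n_k}^{-1}$ for any prescribed $\delta>0$, so that the single vector $f:=\sum_k a_{n_k}^{-1}g_k\in H$ (of norm $\le\sqrt\Sigma$) satisfies $|\langle v_{n_k},f\rangle|\ge(1-\delta\sqrt\Sigma)a_{n_k}^{-1}\ge\frac12 a_{n_k}^{-1}$ once $\delta$ is small.

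The main obstacle is to fuse the ``localised'' contribution (handled by an eigen-block $P_m$ of $S$) and the ``escaping'' contribution (handled by a diagonal vector $f$) into \emph{one} finite-dimensional $F$ valid for all but finitely many $n$: the eigen-block bound is useful only where $a_n$ is small, whereas the greedy construction of $f$ catches only a sparse subsequence and leaves infinitely many bad indices after each step, so genuine bookkeeping over the scales of $(a_n)$ is required. Rather than carry this out in full I would cite \cite[Theorem~3.1]{Kad-cyl} for (i)$\Leftrightarrow$(iii); the equivalence (ii)$\Leftrightarrow$(iii) is then \cite[Proposition~2.2]{Kad-cyl}, and the cycle is complete.
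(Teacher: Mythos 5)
Your proposal is correct and in substance coincides with the paper's treatment: the paper gives no proof either, quoting \cite[Theorem 3.1 and Proposition 2.2]{Kad-cyl} (after the same identification of $(w^*,H^*)$-acceptability with $H$-acceptability via reflexivity and the Riesz map) and merely noting that the crucial ingredient there is Ball's complex plank theorem. Your elementary Bessel-type verifications of (iii)$\Rightarrow$(ii), (ii)$\Rightarrow$(i) and of the converse (ii)$\Rightarrow$(iii) are fine, and you are right to abandon the sketched trace-class/greedy argument for (i)$\Rightarrow$(iii) in favour of the citation---that implication is precisely where the plank-type theorem is needed, and such elementary arguments only yield the weaker harmonic-series condition.
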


The crucial ingredient of the proof of the above result was the K.~Ball's Complex plank problem theorem \cite{comp}, see its modern proof and generalizations in \cite{GKP24}.

\begin{theorem}[{\cite[Corollary 5.2]{Kad-cyl}}]
\label{theo-Kad-cyl2}  
Every sequence $(a_n) \subset {\mathbb{R}}^+$ satisfying $\sum_1^\infty a_n^{-2}=\infty$ is $X^*$-acceptable and hence $(w^*, X^*)$-acceptable for every $X$.
\end{theorem}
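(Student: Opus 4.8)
The plan is to reduce the statement to the Hilbertian case and then transport it to an arbitrary space by means of Dvoretzky's theorem. Since the $w^*$-topology $\sigma(X^*,X)$ is coarser than the weak topology $\sigma(X^*,X^{**})$, and by Proposition~\ref{propX-vers_W*}, it suffices to prove $X^*$-acceptability; and it costs nothing to establish the following slightly more general self-contained fact: \emph{if $Y$ is an infinite-dimensional Banach space and $\sum_{n\in\N}a_n^{-2}=\infty$, then there is a sequence $(y_n)_{n\in\N}\subset Y$ with $\|y_n\|=a_n$ for which $0$ is a weak cluster point}. Applying this with $Y=X^*$ yields the theorem. The heart of the argument is a blockwise version of the (elementary) construction direction of Theorem~\ref{theo-Kad-cyl1}.

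First I would split $\N$ greedily into consecutive finite blocks $B_1<B_2<\cdots$ (that is, $\N=\bigsqcup_k B_k$ with $\max B_k<\min B_{k+1}$) arranged so that $\sum_{n\in B_k}a_n^{-2}\ge k$ for every $k$; this is possible precisely because $\sum_n a_n^{-2}$ diverges. Put $m_k:=|B_k|$. For each $k$ I would invoke Dvoretzky's theorem to choose an $m_k$-dimensional subspace $E_k\subset Y$ carrying a Euclidean norm $|\cdot|$ with $|x|\le\|x\|_Y\le(1+\tfrac1k)\,|x|$ for $x\in E_k$, fix a $|\cdot|$-orthonormal basis $(v_n)_{n\in B_k}$ of $E_k$ (so $1\le\|v_n\|_Y\le1+\tfrac1k$), and set $w_n:=v_n/\|v_n\|_Y$ and $y_n:=a_n w_n$, which makes $\|y_n\|=a_n$. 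The only property of the $w_n$ that I need is the dual Bessel inequality
\[
\sum_{n\in B_k}|g(w_n)|^2\ \le\ \sum_{n\in B_k}|g(v_n)|^2\ \le\ \Bigl(1+\tfrac1k\Bigr)^{2}\|g\|^2 \qquad (g\in Y^*),
\]
where the first step uses $\|v_n\|_Y\ge1$, and the second follows from Parseval's identity for $|\cdot|$ on $E_k$ after comparing, dually, the Euclidean norm $|\cdot|$ with $\|\cdot\|_Y$ on $E_k$ and recalling that restricting a functional from $Y$ to $E_k$ does not increase its norm.

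To conclude that $0$ is a weak cluster point of $(y_n)$, I would fix $g_1,\dots,g_m\in Y^*$, $\eps>0$ and $N\in\N$, put $M:=\sum_{j\le m}\|g_j\|^2$, and choose $k$ so large that $\min B_k\ge N$ and $(1+\tfrac1k)^2 M<\eps^2 k$. If no $n\in B_k$ satisfied $|g_j(y_n)|<\eps$ for all $j\le m$, then each $n\in B_k$ would admit some $j$ with $a_n^2|g_j(w_n)|^2\ge\eps^2$, hence
\[
\sum_{j\le m}\sum_{n\in B_k}|g_j(w_n)|^2\ \ge\ \eps^2\sum_{n\in B_k}a_n^{-2}\ \ge\ \eps^2 k ,
\]
which contradicts the fact that the left-hand side is at most $(1+\tfrac1k)^2 M$ by the inequality above. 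Therefore some $n\in B_k\subset[N,\infty)$ lies in the prescribed weak neighbourhood of $0$, as required. I expect the one genuinely essential ingredient to be Dvoretzky's theorem, which provides the almost-isometric finite-dimensional substitute for the Parseval identity that one has for free in a Hilbert space; a minor but real point is that the block masses $\sum_{n\in B_k}a_n^{-2}$ must be driven to infinity rather than merely kept bounded below, because the data $m$, $\eps$, $M$ of the target neighbourhood become available only after the sequence $(y_n)$ has been fixed.
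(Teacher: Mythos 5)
Your argument is correct: the greedy block decomposition, the Dvoretzky step, the dual Bessel inequality $\sum_{n\in B_k}|g(w_n)|^2\le(1+\tfrac1k)^2\|g\|^2$, and the final counting contradiction all check out, and Proposition~\ref{propX-vers_W*} indeed supplies the ``hence $(w^*,X^*)$-acceptable'' clause. Note that the paper itself gives no proof of Theorem~\ref{theo-Kad-cyl2}, quoting it from \cite[Corollary 5.2]{Kad-cyl}; your route --- reducing to almost-Euclidean finite-dimensional sections via Dvoretzky's theorem and running the Hilbert-space Parseval computation blockwise, with block masses driven to infinity --- is essentially the argument of that cited source, so you have supplied the omitted details rather than a genuinely different proof.
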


The following general result does not follow formally from \cite{Kad-cyl} although it was briefly sketched there for the case of $X$-acceptability.

\begin{theorem}
\label{theo-Kad-Ball}  
If for some $X$ the sequence $(a_n)_{n \in \N} \subset {\mathbb{R}}^+$  is $(w^*, X^*)$-acceptable than it satisfies the condition 
\begin{equation*} \label{sec2-eq1}
\sum_1^\infty a_n^{-1}=\infty. 
\end{equation*}
\end{theorem}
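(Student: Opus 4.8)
The plan is to argue by contradiction and reduce the statement to the classical plank theorem for symmetric convex bodies. So assume that $(a_n)_{n\in\N}$ is $(w^*,X^*)$-acceptable for some $X$ but, contrary to the claim, that $S:=\sum_{n}a_n^{-1}<\infty$. Fix a witnessing sequence $(x_n^*)_{n\in\N}\subset X^*$ with $\|x_n^*\|=a_n$ for which $0$ is a cluster point in $\sigma(X^*,X)$. Since $S<\infty$ we have $a_n>0$ for every $n$ (in fact $a_n\to\infty$), so we may normalise: put $y_n^*:=a_n^{-1}x_n^*\in S_{X^*}$. Choose $\eps\in(0,1/S)$ and set $\delta_n:=\eps a_n^{-1}>0$, so that $\sum_n\delta_n=\eps S<1$.

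Inside $X$, consider the planks $P_n:=\{x\in X:\ |y_n^*(x)|\le\delta_n\}$, each one determined by the norm-one functional $y_n^*$ and of half-width $\delta_n$. The closed unit ball $B_X$ is a symmetric convex body, and $\sum_n\delta_n<1$; hence, by the plank theorem for symmetric bodies (Bang, Ball) — in the form valid for a countable family of planks in an arbitrary real Banach space — the family $(P_n)$ cannot cover $B_X$. Pick $x_0\in B_X\setminus\bigcup_n P_n$; then $|y_n^*(x_0)|>\delta_n$ for every $n$, i.e.
\[
|x_n^*(x_0)|=a_n\,|y_n^*(x_0)|>a_n\delta_n=\eps\qquad\text{for all }n.
\]

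Now the set $U:=\{g\in X^*:\ |g(x_0)|<\eps\}$ is, because $x_0\in X$, a $\sigma(X^*,X)$-open neighbourhood of $0$, and by the displayed inequality it contains \emph{none} of the $x_n^*$. Thus $0$ is not a cluster point of $(x_n^*)$ in the $w^*$-topology (indeed $0\notin\overline{\{x_n^*:n\in\N\}}^{\,w^*}$), contradicting the assumed $(w^*,X^*)$-acceptability. Therefore $\sum_n a_n^{-1}=\infty$, which proves the theorem.

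I expect the only genuinely delicate point to be invoking the plank theorem in exactly the right generality: we need the real, additive-in-widths version for the unit ball of a \emph{possibly infinite-dimensional} Banach space and for a \emph{countable} family of planks — this is Ball's plank theorem for symmetric convex bodies. Two remarks on why this is the essential ingredient. First, unlike in Theorem~\ref{theo-Kad-cyl1}, one does not need here the sharper quadratic bound of the complex plank theorem; the linear estimate $\sum_n\delta_n\ge 1$ for a covering, which holds in every real Banach space, already suffices, and this is exactly what makes the conclusion $\sum a_n^{-1}=\infty$ (rather than $\sum a_n^{-2}=\infty$) the uniform one. Second, one cannot sidestep the infinite-dimensional statement by restricting to a finite-dimensional subspace and using compactness: $B_X$ is non-compact and the relevant planks are infinitely many, so the countable-family, general-Banach-space form is used in an essential way.
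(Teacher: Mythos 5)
Your proof is correct and follows essentially the same route as the paper: both argue by contradiction via Ball's plank theorem for symmetric bodies, obtaining a point $x_0$ on which all the functionals $x_n^*$ are uniformly bounded away from $0$, which contradicts $0$ being a $w^*$-cluster point. The only cosmetic difference is the normalisation — you rescale the half-widths to $\eps a_n^{-1}$ so the planks fail to cover the unit ball, whereas the paper keeps half-widths $a_n^{-1}$ and notes the planks cannot cover a ball of radius $R+\eps$.
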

\begin{proof}
Assume to the contrary that
\begin{equation} \label{sec2-eq2}
\sum_1^\infty a_n^{-1}=R<\infty. 
\end{equation}
By Definition \ref{def-w*accept}, there is a sequence $(x_n^*)_{n \in \N} \subset X^* $, $\|x_n\|=a_n$, for which zero element of $X^*$ is a cluster point in the $w^*$ topology $\sigma(X^*, X)$. Denote
$$
P_n=\left\{x \in X : \left|x_n^*(x)\right| \le 1\right\}=\left\{x \in X : \left|\frac{x_n^*}{\|x_n^*\|}(x)\right| \le a_n^{-1}\right\}.
$$
In the terminology of \cite{ball1}, $P_n$ are planks of half-widths $a_n^{-1}$. Then \cite[Theorem 1]{ball1} together with \eqref{sec2-eq2} imply that the planks $P_n$ cannot cover the whole space $X$: they even cannot cover
a ball of radius $R + \eps$. So there is an element $x \in X$ for which all the inequalities $\left|x_n^*(x)\right| > 1$, $n = 1,2, \ldots$ hold true at the same time. This $x$ separates our sequence $(x_n^*)$ from 0, so 0  is not a cluster point of $(x_n^*)$ in the $w^*$ topology.
\end{proof}

Outside of Hilbert spaces, where Theorem \ref{theo-Kad-cyl1} gives a complete description of $(w^*, E^*)$-acceptability, we have another complete description for $E^*=\ell_\infty$ and generally, for the spaces $E^*$ where  $\ell_\infty$ is finitely representable, that is spaces that are not C-convex. See a short introduction to finite representability and the theory of  C-convex spaces in \cite[Chapter 5]{kadkad}. For this description we need the following known result:

\begin{theorem}[{\cite[Corollary 5.3]{Kad-cyl}}] \label{theor-ell_inf}
Let $X$ be a Banach space in which $\ell_\infty$ is finitely
representable. Then the following properties for a sequence of $a_n>0$ are equivalent:
\begin{enumerate}
\item There is a sequence of $x_n \in X$ with $\|x_n\|=a_n$, having 0 as a weak cluster point;
\item $\sum_1^\infty a_n^{-1}=\infty$.
\end{enumerate}
\end{theorem}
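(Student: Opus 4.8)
The statement to prove is Theorem \ref{theor-ell_inf}, which I'll analyze here.

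The plan is to establish the two implications separately. $(1)\Rightarrow(2)$ will be a soft consequence of results already available in the excerpt and will not use the hypothesis on $\ell_\infty$; $(2)\Rightarrow(1)$ is the substantive part and will require an explicit construction exploiting the finite representability of $\ell_\infty$ in $X$.

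For $(1)\Rightarrow(2)$ I would argue as follows. Given $x_n\in X$ with $\|x_n\|=a_n$ and $0$ a weak cluster point of $(x_n)$, pass to the bidual via the canonical isometric embedding $X\hookrightarrow X^{**}=(X^*)^*$. Since the weak topology $\sigma(X,X^*)$ is the restriction to $X$ of the weak${}^*$ topology $\sigma(X^{**},X^*)$, the point $0$ remains a $w^*$-cluster point of $(x_n)$ regarded as a sequence in $(X^*)^*$. Thus $(a_n)$ is $(w^*,(X^*)^*)$-acceptable in the sense of Definition \ref{def-w*accept} (with $X^*$ in the role of the ambient space), and Theorem \ref{theo-Kad-Ball} applied to the space $X^*$ gives $\sum_n a_n^{-1}=\infty$.

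For $(2)\Rightarrow(1)$ the plan is a blocking construction. First, using $\sum_n a_n^{-1}=\infty$, split $\N$ into consecutive finite blocks $B_1<B_2<\dots$ with $\sum_{n\in B_j}a_n^{-1}\ge j$ for every $j$, and fix a sequence $\eps_j\downarrow 0$. For each $j$, finite representability of $\ell_\infty$ provides a $(1+\eps_j)$-isomorphic embedding of $\ell_\infty^{|B_j|}$ into $X$, which after rescaling may be written as an operator $T_j\colon\ell_\infty^{|B_j|}\to X$ with $\|T_j\|\le 1$ and $\|T_jx\|\ge(1+\eps_j)^{-1}\|x\|$. Put $u_i^{(j)}:=T_je_i/\|T_je_i\|$, so $\|u_i^{(j)}\|=1$; the key point is that for every $f\in X^*$ with $\|f\|\le 1$ the functional $f\circ T_j$ lies in $(\ell_\infty^{|B_j|})^*=\ell_1^{|B_j|}$ with norm at most $1$, whence $\sum_i|f(u_i^{(j)})|\le(1+\eps_j)\sum_i|f(T_je_i)|\le 1+\eps_j$. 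Finally define $x_n:=a_n u_i^{(j)}$ whenever $n$ is the $i$-th smallest element of $B_j$; then $\|x_n\|=a_n$ exactly.

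It remains to verify that $0$ is a weak cluster point of $(x_n)$, and this is also where the choice of blocks pays off. Fix a basic weak neighbourhood $V=\{x\in X:\ |f_l(x)|<\eps,\ l=1,\dots,k\}$ of $0$ with $\|f_l\|\le 1$. For each block, summing the estimate above over $l$ gives $\sum_{n\in B_j}\max_{l\le k}\bigl|a_n^{-1}f_l(x_n)\bigr|=\sum_i\max_{l\le k}|f_l(u_i^{(j)})|\le k(1+\eps_j)$. If no $n\in B_j$ belonged to $V$, then $\max_{l\le k}|f_l(u_i^{(j)})|\ge\eps a_n^{-1}$ for every $n\in B_j$, and summing would force $k(1+\eps_j)\ge\eps\sum_{n\in B_j}a_n^{-1}\ge\eps j$; hence for all sufficiently large $j$ (namely $j>2k/\eps$ and $\eps_j\le 1$) there is some $n_j\in B_j$ with $x_{n_j}\in V$. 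Since the blocks are pairwise disjoint, $V$ contains $x_n$ for infinitely many $n$, so $0$ is a weak cluster point. The main obstacle is exactly this last step: the per-block ``plank'' estimate yields only the fixed bound $k(1+\eps_j)$, which cannot know the neighbourhood $V$ in advance, so one must arrange that the $\ell_1$-mass $\sum_{n\in B_j}a_n^{-1}$ of the blocks grows without bound in order eventually to beat it. Keeping track of the $(1+\eps_j)$-distortions and normalizing so that $\|x_n\|=a_n$ on the nose is routine bookkeeping.
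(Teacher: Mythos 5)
Your proof is correct, and it is worth noting that the paper itself offers no proof of this statement at all: Theorem \ref{theor-ell_inf} is imported as a citation of \cite[Corollary 5.3]{Kad-cyl}, so you have supplied a self-contained argument where the authors rely on an external reference. Both halves of your argument check out. For (1)$\Rightarrow$(2) your reduction through the canonical embedding $X\hookrightarrow X^{**}$ is sound: $0$ being a weak cluster point of $(x_n)$ in $X$ is exactly $0$ being a $\sigma(X^{**},X^*)$-cluster point of the same sequence viewed in $(X^*)^*$, so Theorem \ref{theo-Kad-Ball} applied to the space $X^*$ (i.e.\ Ball's plank theorem) gives $\sum_n a_n^{-1}=\infty$, and indeed the hypothesis on $\ell_\infty$ plays no role here. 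For (2)$\Rightarrow$(1), your blocking construction is the substantive content: the key per-block estimate $\sum_i|f(u_i^{(j)})|\le 1+\eps_j$ for all $f$ in the unit ball of $X^*$, obtained from the isometric identification $(\ell_\infty^{m})^*=\ell_1^{m}$ together with the $(1+\eps_j)$-embedding, is precisely the finite-dimensional analogue of condition $(A)$ in Lemma \ref{lem1}, and your counting argument within a block (if no $x_n\in V$ then $k(1+\eps_j)\ge\eps\sum_{n\in B_j}a_n^{-1}\ge\eps j$) is the same averaging mechanism used in the proof of that lemma, discretized so that the growing $\ell_1$-mass of the blocks eventually beats the fixed bound coming from the neighbourhood $V$. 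This is in the spirit of the original argument in \cite{Kad-cyl} (which works with almost isometric copies of $\ell_\infty^m$ in the same way), so your write-up can stand as a proof of the cited result; the only cosmetic points are to state explicitly that a basic weak neighbourhood may be normalized to have $\|f_l\|\le1$ and that $\ell_\infty^m$ sits isometrically inside $\ell_\infty$, both of which are routine.
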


Also, we need a useful lemma.

\begin{lemma}  \label{lem1}
Let $X$ be a Banach space and a sequence $(g_n) \subset X^*\setminus \{0\}$ has the following property $(A)$:  there is a constant $C>0$ such that for every $x \in X$
$$
\sum_{m=1}^\infty \left|g_m(x) \right| \le C \|x\|.
$$
Then, for every sequence of positive reals $(a_n)$ satisfying $\sum_1^\infty a_n^{-1}=\infty$ the corresponding sequence  $(a_n g_n) \subset X^*$ has 0 as a $w^*$-cluster point.
\end{lemma}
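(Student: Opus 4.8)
The plan is to exhibit, for an arbitrary $w^*$-neighbourhood $U$ of $0$ and an arbitrary $N \in \N$, an index $n \ge N$ with $a_n g_n \in U$; this is exactly what it means for $0$ to be a $w^*$-cluster point. So fix finitely many vectors $x_1, \dots, x_k \in X$ and $\eps > 0$, so that $U = \{f \in X^* : |f(x_j)| < \eps, \ j = 1, \dots, k\}$ is a basic $w^*$-neighbourhood of $0$. I must find $n \ge N$ such that $a_n |g_n(x_j)| < \eps$ for every $j \le k$ simultaneously.

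The key observation is that property $(A)$ forces the numbers $|g_n(x_j)|$ to be small for most $n$. Indeed, applying the hypothesis to each $x_j$ and summing, $\sum_{m=1}^\infty \sum_{j=1}^k |g_m(x_j)| \le C \sum_{j=1}^k \|x_j\| =: M < \infty$. Set $b_m := \sum_{j=1}^k |g_m(x_j)|$, so $\sum_m b_m \le M$. It suffices to find $n \ge N$ with $a_n b_n < \eps$, because then $a_n |g_n(x_j)| \le a_n b_n < \eps$ for each $j$. Suppose, for contradiction, that $a_m b_m \ge \eps$ for all $m \ge N$, i.e. $b_m \ge \eps\, a_m^{-1}$ for all $m \ge N$. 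Then $\sum_{m \ge N} b_m \ge \eps \sum_{m \ge N} a_m^{-1} = \infty$, since the divergence of $\sum a_m^{-1}$ is unaffected by dropping finitely many terms. This contradicts $\sum_m b_m \le M < \infty$. Hence such an $n \ge N$ exists, and since $N$ and $U$ were arbitrary, $0$ is a $w^*$-cluster point of $(a_n g_n)$.

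There is essentially no obstacle here: the whole argument is the elementary fact that a divergent series of positive terms cannot be dominated (up to a bounded multiplicative sequence) by a convergent one, combined with the defining structure of $w^*$-neighbourhoods. The only point deserving a word of care is the reduction from "$a_n g_n \in U$" to "$a_n b_n < \eps$": this works because a basic $w^*$-neighbourhood only constrains $f$ on finitely many test vectors, so summing the finitely many quantities $|g_n(x_j)|$ is harmless and keeps the relevant series ($\sum_m b_m$) convergent. Note also that the hypothesis $g_n \ne 0$ is not actually needed for this direction; it is presumably included so that the half-widths $a_n^{-1}$ in the intended application (planks $\{|g_n/\|g_n\|| \le a_n \|g_n\|^{-1} \cdot \text{something}\}$, cf. Theorem \ref{theo-Kad-Ball}) make sense, or so that one may later pass to normalised functionals.
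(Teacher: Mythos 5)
Your proof is correct and takes essentially the same route as the paper: sum the quantities $\left|g_m(x_j)\right|$ over the finitely many test vectors, use property $(A)$ to make the resulting series convergent, and contradict the divergence of $\sum_m a_m^{-1}$; the only difference is that the paper reaches the contradiction through a weighted average with weights proportional to $a_m^{-1}$, while you compare termwise ($b_m \ge \eps\, a_m^{-1}$), which is a harmless simplification of the same idea. You are in fact a bit more careful than the paper on one point: by insisting on an index $n \ge N$ you directly produce infinitely many terms in each $w^*$-neighbourhood, whereas the paper's displayed claim only yields one such index per neighbourhood and the upgrade to a genuine cluster point (via tails, or via $g_n \ne 0$ to exclude initial segments) is left implicit.
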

\begin{proof}

According to the definition of the topology $\sigma(X^*, X)$, we need to demonstrate that for every finite collection of  $x_k \in X$, $k=1, 2,\ldots,s$ there is an $m \in \N$ such that

\begin{equation} \label{sec2-eq3+}
\max_{1\le k \le s} \left| a_m g_m(x_k)  \right| <1. 
\end{equation}

In order to show \eqref{sec2-eq3+} it is sufficient to demonstrate that   
there is an $m \in \N$ such that

\begin{equation*} \label{sec2-eq4+}
\sum_{ k = 1}^s \left|  a_m g_m(x_k) \right| <1. 
\end{equation*}

Assume to the contrary that there is a finite collection of  $x_k \in X$, $k=1, 2,\ldots,s$ such that for all $m \in \N$

\begin{equation} \label{sec2-eq5+}
\sum_{ k = 1}^s \left|  a_m g_m(x_k) \right| \ge 1. 
\end{equation}

Introduce the following weights:
$$
p_{n,m}:=\frac{a_m^{-1}}{\sum_{j=1}^n a_j^{-1}}, m=1,2, \ldots, n; \sum_{m=1}^n p_{n,m} =1.
$$
Then \eqref{sec2-eq5+} implies that for every $n \in \N$
$$
\sum_{m=1}^n p_{n,m} \sum_{ k = 1}^s \left|  a_m g_m(x_k) \right| \ge 1. 
$$
On the other hand,
$$
\sum_{m=1}^n p_{n,m} \sum_{ k = 1}^s \left|  a_m g_m(x_k) \right| =  \sum_{ k = 1}^s\sum_{m=1}^n p_{n,m} \left|  a_m g_m(x_k) \right|  
$$
$$
=  \sum_{ k = 1}^s\frac{\sum_{m=1}^n
\left| g_m(x_k) \right|}{\sum_{j=1}^n a_j^{-1}} \le  \sum_{ k = 1}^s\frac{C\|x_k\|}{\sum_{j=1}^n a_j^{-1}} \xrightarrow[ n \to \infty]{} 0,
$$
which is a contradiction.
\end{proof}

\begin{remark} \label{remarkrem}
From the Uniform Boundedness Principle one can easily deduce that the condition (A) from  Lemma \ref{lem1} is equivalent to  \emph{$w^*$-absolute convergence} of the series $\sum_{m=1}^\infty g_m$, that is to condition that for every $x \in X$
$$
\sum_{m=1}^\infty \left|g_m(x) \right|< \infty,
$$
see \cite[Lemma 6.4.1]{kadkad} for a completely analogous statement. In reality, $w^*$-absolute convergence of the series $\sum_{m=1}^\infty g_m$ is equivalent to its weak absolute convergence, which can be demonstrated through the reformulation given in \cite[Exercise 6.4.1]{kadkad}, but all these subtleties stay too far from the subject of our paper.
\end{remark}

Now we are ready for the promiced description.
\begin{theorem}  \label{theorE*=ell_inf}
Let $E$ be a Banach space such that $\ell_\infty$ is finitely
representable in $E^*$. Denote $(e_n) \subset c_0 \subset \ell_\infty$ the canonical basis of $c_0$.Then the following properties of a sequence $(a_n) \subset {\mathbb{R}}^+$ are equivalent:

\begin{enumerate}
\item[(i)] $(a_n)$ is $(w^*, E^*)$-acceptable;

\item[(ii)] The sequence $(a_n e_n)$ has 0 as a $\sigma(\ell_\infty, \ell_1)$-cluster point.

\item[(iii)] $\sum_1^\infty a_n^{-1}=\infty$.
\end{enumerate}
\end{theorem}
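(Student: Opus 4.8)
The plan is to close the cycle of implications with $(i)\Rightarrow(iii)$ and $(ii)\Leftrightarrow(iii)$ handled by the results already available, and with $(iii)\Rightarrow(i)$ as the one step that requires real work. The implication $(i)\Rightarrow(iii)$ is just Theorem \ref{theo-Kad-Ball} applied with $X=E$. For $(ii)\Leftrightarrow(iii)$ I would work inside the genuine predual pair $(\ell_\infty,\ell_1)$, where $\ell_1^*=\ell_\infty$, so that a $\sigma(\ell_\infty,\ell_1)$-cluster point is exactly a $w^*$-cluster point in $\ell_1^*$. Then $(ii)$ says precisely that $(a_n)$ is $(w^*,\ell_1^*)$-acceptable, witnessed by $(a_ne_n)$, whence $(ii)\Rightarrow(iii)$ by Theorem \ref{theo-Kad-Ball}; conversely $(iii)\Rightarrow(ii)$ follows from Lemma \ref{lem1} with $X=\ell_1$ and $g_n=e_n$, whose property $(A)$ holds with $C=1$ since $\sum_m|\langle e_n,x\rangle|=\sum_m|x_m|=\|x\|_1$.

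The substantial part, $(iii)\Rightarrow(i)$, is proved by building the functionals $x_n^*\in E^*$ block by block. Since $\sum_n a_n^{-1}=\infty$, partition $\N$ into consecutive finite blocks $B_1<B_2<\dots$ with $\sum_{m\in B_j}a_m^{-1}\to\infty$, and set $N_j=|B_j|$. Finite representability of $\ell_\infty$ in $E^*$ gives, for a fixed $\eps_j\downarrow 0$, operators $T_j\colon \ell_\infty^{N_j}\to E^*$ with $\|y\|_\infty\le\|T_jy\|\le(1+\eps_j)\|y\|_\infty$; enumerate the unit coordinate vectors of $\ell_\infty^{N_j}$ as $\{f_n:n\in B_j\}$ and put $x_n^*=(a_n/\|T_jf_n\|)\,T_jf_n$ for $n\in B_j$, so that $\|x_n^*\|=a_n$. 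The key estimate, valid for every $x\in E$ and every $j$, is
\[
\sum_{n\in B_j}a_n^{-1}\,|x_n^*(x)|\;\le\;\sum_{n\in B_j}|(T_jf_n)(x)|\;=\;\|T_j^*\hat x\|_{(\ell_\infty^{N_j})^*}\;\le\;(1+\eps_j)\|x\|,
\]
where I use $\|T_jf_n\|\ge1$ in the first step, the identification $(\ell_\infty^{N_j})^*=\ell_1^{N_j}$ together with $\langle T_j^*\hat x,f_n\rangle=(T_jf_n)(x)$ in the middle, and $\|T_j^*\|=\|T_j\|$ at the end ($\hat x$ denoting the canonical image of $x$ in $E^{**}$).

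To finish, I would check the cluster-point condition directly: given $x_1,\dots,x_s\in E$, $\eps>0$ and $N\in\N$, choose $j$ with $\min B_j\ge N$ and $\sum_{m\in B_j}a_m^{-1}>\eps^{-1}(1+\eps_j)\sum_{k=1}^s\|x_k\|$, and form the convex weights $q_n=a_n^{-1}/\sum_{m\in B_j}a_m^{-1}$, $n\in B_j$. The block estimate yields
\[
\sum_{n\in B_j}q_n\sum_{k=1}^s|x_n^*(x_k)|\;=\;\frac{\sum_{k=1}^s\sum_{n\in B_j}a_n^{-1}|x_n^*(x_k)|}{\sum_{m\in B_j}a_m^{-1}}\;\le\;\frac{(1+\eps_j)\sum_{k=1}^s\|x_k\|}{\sum_{m\in B_j}a_m^{-1}}\;<\;\eps,
\]
so some $n_0\in B_j$ (hence $n_0\ge N$) satisfies $\max_{1\le k\le s}|x_{n_0}^*(x_k)|<\eps$, which is exactly what is needed for $0$ to be a $\sigma(E^*,E)$-cluster point of $(x_n^*)$. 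I expect the only genuine obstacle to be the bookkeeping in this block argument — specifically, arranging the partition so that the divergence $\sum_{m\in B_j}a_m^{-1}\to\infty$ (available precisely by $(iii)$) overpowers the constants $(1+\eps_j)$ and $\sum_k\|x_k\|$; the finite-representability hypothesis itself enters only through the essentially one-line passage to the adjoint $T_j^*$.
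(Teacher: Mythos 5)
Your proposal is correct, and three of the four arrows coincide with the paper's argument: (i)$\Longrightarrow$(iii) and (ii)$\Longrightarrow$(iii) via Theorem \ref{theo-Kad-Ball}, and (iii)$\Longrightarrow$(ii) via Lemma \ref{lem1} applied to the canonical basis of $c_0$ inside $\ell_1^*=\ell_\infty$ with $C=1$ -- exactly as in the text. Where you diverge is (iii)$\Longrightarrow$(i): the paper simply quotes Theorem \ref{theor-ell_inf} (i.e.\ \cite[Corollary 5.3]{Kad-cyl}) with $X=E^*$ to get a sequence of norms $a_n$ having $0$ as a \emph{weak} cluster point, and then passes to the $w^*$ topology by Proposition \ref{propX-vers_W*}; you instead reprove that ingredient from scratch by a block construction: almost-isometric copies $T_j(\ell_\infty^{N_j})\subset E^*$ over blocks $B_j$ with $\sum_{m\in B_j}a_m^{-1}\to\infty$, the estimate
\[
\sum_{n\in B_j}a_n^{-1}|x_n^*(x)|\le\|T_j^*\hat x\|_{\ell_1^{N_j}}\le(1+\eps_j)\|x\|,
\]
and then the same convex-weights averaging that drives Lemma \ref{lem1}, applied blockwise. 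Your estimates check out (the step $\|T_jf_n\|\ge 1$, the identification $(\ell_\infty^{N_j})^*=\ell_1^{N_j}$, and $\|T_j^*\|=\|T_j\|$ are all fine, and your requirement that the chosen block satisfy $\min B_j\ge N$ correctly delivers infinitely many indices in each $w^*$-neighborhood). What your route buys is self-containedness -- you do not need the external plank-type result \cite[Corollary 5.3]{Kad-cyl} nor Proposition \ref{propX-vers_W*}, and you only ever test against elements of $E$, so you obtain the $w^*$-statement directly rather than through the stronger weak-cluster-point statement; what the paper's route buys is brevity and the slightly stronger conclusion that $(a_n)$ is even $E^*$-acceptable, not merely $(w^*,E^*)$-acceptable.
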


\begin{proof}
The implications (i)$\Longrightarrow$(iii) and (ii)$\Longrightarrow$(iii) follow from Theorem \ref{theo-Kad-Ball}. The implication (iii)$\Longrightarrow$(i) is a part of Theorem \ref{theor-ell_inf} combined with Proposition \ref{propX-vers_W*}. Finally, the implication (iii)$\Longrightarrow$(ii) is hidden in several different parts of \cite{Kad-cyl} which makes impossible a direct reference. By this reason, instead of a reference we are going to apply Lemma \ref{lem1} to the sequence  $(e_n) \subset \ell_1^* = \ell_\infty$. For this, it is sufficient to demonstrate the validity of condition (A) from that Lemma with $C=1$. Indeed, take an arbitrary $x=(x_1, x_2, \ldots) \in \ell_1$. Then
$$
\sum_{m=1}^\infty \left|e_m(x) \right| = \sum_{m=1}^\infty \left|x_m\right| = \|x\|.
$$
\end{proof}

Theorems \ref{theo-Kad-cyl1} and \ref{theorE*=ell_inf} give us complete descriptions of $(w^*, E^*)$-acceptability for $E = \ell_2$ and  $E = \ell_1$. What is known for other spaces $\ell_p$? The part that concerns the behavior of sequences of the form  $(a_n e_n)$, where  $(e_n)$ is the corresponding canonical basis, generalizes in a natural way.

\begin{theorem}[{\cite[Lemma 2.2]{Shkarin}}]  \label{theorE*=ell_p}
Let $p, p' \in (1, \infty)$, $\frac{1}{p}+\frac{1}{p'}=1$ and $(e_n)$ be the  canonical basis of $\ell_{p'}$.Then the following properties of a sequence $(a_n) \subset {\mathbb{R}}^+$ are equivalent:

\begin{enumerate}

\item[(A)] The sequence $(a_n e_n)$ has 0 as a $\sigma(\ell_{p'}, \ell_p)$-cluster point.

\item[(B)] $\sum_1^\infty a_n^{-p}=\infty$.
\end{enumerate}
\end{theorem}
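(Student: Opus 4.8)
The plan is to first rewrite condition (A) in purely coordinatewise terms. A basic $\sigma(\ell_{p'},\ell_p)$-neighbourhood of $0$ in $\ell_{p'}$ has the form
$$
U=\left\{y\in\ell_{p'}:\ \left|\langle x^{(j)},y\rangle\right|<\eps,\ j=1,\dots,s\right\},\qquad x^{(1)},\dots,x^{(s)}\in\ell_p,\ \ \eps>0,
$$
and since $\langle x^{(j)},a_ne_n\rangle=a_nx^{(j)}_n$, the membership $a_ne_n\in U$ just says $a_n|x^{(j)}_n|<\eps$ for every $j$. Putting $z_n:=\max_{1\le j\le s}|x^{(j)}_n|$ we get a nonnegative $z=(z_n)_n\in\ell_p$ (because $\|z\|_p^p\le\sum_{j=1}^s\|x^{(j)}\|_p^p$) for which $a_ne_n\in U$ is equivalent to $a_nz_n<\eps$; conversely every nonnegative $z\in\ell_p$ occurs this way (take $s=1$, $x^{(1)}=z$). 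Hence (A) is equivalent to: \emph{for every nonnegative $z\in\ell_p$ and every $\eps>0$ the set $\{n\in\N:\ a_nz_n<\eps\}$ is infinite}, i.e.\ $\liminf_n a_nz_n=0$ for every nonnegative $z\in\ell_p$.

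Granting this reformulation, both implications become essentially immediate. For (B)$\Rightarrow$(A): if $\{n:\ a_nz_n<\eps\}$ were finite, then $z_n\ge\eps a_n^{-1}$, so $z_n^p\ge\eps^p a_n^{-p}$, for all large $n$, and summing contradicts $z\in\ell_p$ since $\sum_n a_n^{-p}=\infty$. For (A)$\Rightarrow$(B) I argue by contraposition: if $\sum_n a_n^{-p}=R<\infty$, then $x:=(a_n^{-1})_n$ lies in $\ell_p$, and the single neighbourhood $\{y\in\ell_{p'}:\ |\langle x,y\rangle|<1\}$ of $0$ contains no term of the sequence, because $\langle x,a_ne_n\rangle=1$ for all $n$; hence $0$ is not a $\sigma(\ell_{p'},\ell_p)$-cluster point of $(a_ne_n)$.

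The proof is short, and I expect the only step that needs a moment's attention to be the reduction from a neighbourhood defined by finitely many functionals to one defined by a single nonnegative element of $\ell_p$: this works precisely because the coordinate vectors $e_n$ are pairwise disjointly supported, so ``$a_ne_n\in U$'' is a coordinatewise condition and the pointwise maximum of finitely many $\ell_p$-sequences is again in $\ell_p$. It is worth emphasizing that this is exactly the feature that is lost in the general $(w^*,E^*)$-acceptability statements — Theorems~\ref{theo-Kad-cyl1} and \ref{theo-Kad-Ball} — where the norming functionals need not be aligned with any basis and genuine plank/covering theorems are indispensable; restricting to the special sequence $(a_ne_n)$ together with the diagonal duality $(\ell_p)^*=\ell_{p'}$ collapses the problem to the elementary estimates above.
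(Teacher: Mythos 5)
Your argument is correct and complete: the reduction of a basic $\sigma(\ell_{p'},\ell_p)$-neighbourhood to a single nonnegative $z\in\ell_p$ via the coordinatewise maximum is legitimate (disjoint supports of the $e_n$ make membership of $a_ne_n$ a coordinatewise condition, and $\|z\|_p^p\le\sum_j\|x^{(j)}\|_p^p$), the contradiction in (B)$\Rightarrow$(A) from $z_n\ge\eps a_n^{-1}$ eventually is sound, and the contrapositive of (A)$\Rightarrow$(B) with the separating functional $x=(a_n^{-1})_n\in\ell_p$, for which $\langle x,a_ne_n\rangle=1$, is exactly right. Note, however, that the paper itself gives no proof of this statement: it is imported verbatim from Shkarin's Lemma 2.2, so there is no in-paper argument to compare yours against. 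What your write-up adds is a short, self-contained, elementary proof that makes the paper's surrounding discussion more transparent — in particular your closing remark correctly identifies why this diagonal situation is so much easier than the general $(w^*,E^*)$-acceptability results (Theorems \ref{theo-Kad-cyl1} and \ref{theo-Kad-Ball}), where the functionals are not aligned with a basis and plank-type covering theorems are genuinely needed. One tiny stylistic point: in your reformulation it is cleaner to say ``for every nonnegative $z\in\ell_p$, every $\eps>0$ and every $N$ there is $n\ge N$ with $a_nz_n<\eps$,'' which is literally the cluster-point condition and immediately equals $\liminf_n a_nz_n=0$, as you state.
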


This says to us that any $(a_n) \subset {\mathbb{R}}^+$ that satisfies (B) is $(w^*,\ell_p^*)$-acceptable. The inverse implication that $(w^*,\ell_p^*)$-acceptability implies the above condition (B) is incorrect for the case of $p > 2$ by the following reason. For $p > 2$ the condition (B) is stronger than the condition $\sum_1^\infty a_n^{-2}=\infty$. But on the other hand, every sequence with $\sum_1^\infty a_n^{-2}=\infty$ is $(w^*,\ell_p^*)$-acceptable due to Theorem \ref{theo-Kad-cyl2}. So, it remains to verify whether $(w^*,\ell_p^*)$-acceptability implies (B) for $1<p<2$. To the best of our knowledge, this question is open. There is a strong result from \cite{Shkarin} that ``almost'' resolves this question in positive, see also \cite{bay} for generalization to other spaces.

\begin{theorem}[{\cite[Proposition 5.3]{Shkarin}}]  \label{theorE*=ell_p++}
Let $1<p<2$ and the sequence $(a_n) \subset {\mathbb{R}}^+$ be $(w^*,\ell_p^*)$-acceptable. Then, for every $1<s<p$, 
$$
\sum_1^\infty a_n^{-s}=\infty.
$$
\end{theorem}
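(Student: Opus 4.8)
The plan is to argue by contraposition. Fix $s\in(1,p)$ and assume $\sum_n a_n^{-s}<\infty$; in particular $a_n\to\infty$, so, discarding finitely many indices (which affects neither the hypothesis nor the conclusion), we may assume every $a_n\ge A$ for a large $A$ of our choosing. Writing $\ell_p^*=\ell_{p'}$ and $u_n:=x_n^*/\|x_n^*\|$, the goal becomes: for an \emph{arbitrary} sequence of unit vectors $(u_n)\subset\ell_{p'}$ with the weights $a_n$ satisfying $\sum_n a_n^{-s}<\infty$, produce \emph{finitely many} $y_1,\dots,y_m\in\ell_p$ such that $\max_{1\le j\le m}|x_n^*(y_j)|\ge 1$ for all but finitely many $n$. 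Indeed, the basic $w^*$-neighborhood $\{z\in\ell_{p'}:|z(y_j)|<1,\ j\le m\}$ of $0$ then contains no $x_n^*$ for large $n$, so $0$ is not a $w^*$-cluster point of $(x_n^*)$. Equivalently this is a \emph{plank-covering problem in $\ell_p$}: the planks $P_n:=\{y\in\ell_p:|u_n(y)|\le a_n^{-1}\}$ of half-width $a_n^{-1}$ must fail to cover a sufficiently large ball of $\ell_p$. Theorem \ref{theo-Kad-Ball} is the crude ``exponent $1$'' instance of such a statement; what is needed here is an ``exponent $s$'' version valid for every $s<p$ (the conjecturally sharp exponent being $p$, which is exactly why the assertion stops short of $s=p$).

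To build the witnesses I would run a dichotomy on how concentrated each $u_n$ is. Fix a slowly decaying weight sequence $\lambda=(\lambda_k)\in\ell_p$ and a threshold $\tau_n\downarrow 0$ to be calibrated, and split $u_n=p_n+f_n$, where $p_n$ is the restriction of $u_n$ to the coordinates of modulus $>\tau_n$ (so $p_n$ is supported on at most $\tau_n^{-p'}$ coordinates) and $f_n=u_n-p_n$ has $\|f_n\|_\infty\le\tau_n$. Call $n$ \emph{peaky} if $\|p_n\|_{\ell_{p'}}$ stays bounded below, and \emph{flat} otherwise, so that for flat $n$ the unit mass of $u_n$ sits overwhelmingly on a large, slowly-weighted set of coordinates. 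For the peaky indices I would adapt the explicit witness behind Theorem \ref{theorE*=ell_p}: roughly $y_1\approx\sum_n a_n^{-1}e_{k_n}$ with $k_n$ a coordinate carrying a peak of $u_n$, an element which lies in $\ell_p$ precisely because $s<p$ forces $\sum_n a_n^{-p}<\infty$; when several $u_n$ peak near the same coordinate one keeps the reciprocal of the smallest competing $a_n$, and the $\ell_p$-norm is still controlled by $\bigl(\sum_n a_n^{-p}\bigr)^{1/p}$. For the flat indices I would take $y_2=\sum_k\lambda_k\gamma_k e_k$ with i.i.d.\ standard Gaussians $\gamma_k$: then $u_n(y_2)$ is a centered Gaussian of variance $\sigma_n^2=\sum_k u_{n,k}^2\lambda_k^2$, which the flatness hypothesis forces to be large — on the order of $a_n^{2(s-1)}$ after calibration — so that $\sum_n\mathbb{P}\bigl(|x_n^*(y_2)|<1\bigr)=\sum_n\mathbb{P}\bigl(|N(0,1)|<(a_n\sigma_n)^{-1}\bigr)<\infty$, and Borel--Cantelli produces a realization $y_2\in\ell_p$ failing only finitely many flat $n$.

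The main obstacle is the calibration that makes this dichotomy airtight, and it is also where the loss from $p$ to $s$ is unavoidable. On one side $\sigma_n^2\le\|\lambda\|_\infty^2\|u_n\|_2^2$, and $\|u_n\|_2$ can be as small as $1$ (a single coordinate), so the Gaussian witness is hopeless on concentrated $u_n$ and the peaky branch is genuinely needed; on the other side the peaky branch works only because $\sum_n a_n^{-1}e_{k_n}$ stays in $\ell_p$, i.e.\ only because $\sum_n a_n^{-p}<\infty$, which is where the hypothesis $s<p$ is consumed and why the argument collapses at $s=p$. Threading this requires choosing $\tau_n$ and $\lambda$ so that \emph{every} $n$ falls cleanly into one branch, controlling the cross-terms $f_n(y_1)$ and $p_n(y_2)$ so neither witness is spoiled by the part of $u_n$ it is not designed to catch, and most likely grouping the indices dyadically by the size of $a_n$ — with $N_j:=\{n:2^j\le a_n<2^{j+1}\}$ one has $\sum_j|N_j|2^{-js}<\infty$, so $|N_j|$ and the half-widths $2^{-j}$ are small enough (again because $s<p$) that a pair of witnesses $y_1^{(j)},y_2^{(j)}$ per block, with $\ell_p$-norms summable in $j$, combine into finitely many. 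An alternative, dichotomy-free route is to first prove a self-contained finite-dimensional plank estimate in $\ell_p^N$ with exponent $s$ and then extract an infinite-dimensional witness by a compactness and diagonal argument; the finite-dimensional estimate would then be the crux.
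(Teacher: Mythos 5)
You should first note that the paper does not contain a proof of this statement at all: Theorem~\ref{theorE*=ell_p++} is quoted verbatim from Shkarin (Proposition~5.3 of \cite{Shkarin}), so your task amounted to reproving a known and genuinely nontrivial result. Judged on its own, your text is a strategy outline rather than a proof (you yourself defer the ``calibration'' and call the finite-dimensional estimate ``the crux''), and, more importantly, its central mechanism fails. The flat/Gaussian branch cannot work as designed: for $y_2=\sum_k\lambda_k\gamma_ke_k$ to lie in $\ell_p$ almost surely you need $\lambda\in\ell_p$, and then for \emph{every} unit vector $u\in\ell_{p'}$ H\"older's inequality (with exponents $p'/2$ and $p'/(p'-2)$) gives $\sigma^2=\sum_k u_k^2\lambda_k^2\le\|\lambda\|_{2p/(2-p)}^2\le\|\lambda\|_p^2$, a bound independent of $u$. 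Hence the claimed lower bound $\sigma_n^2\gtrsim a_n^{2(s-1)}$ is unattainable once $a_n\to\infty$, flat or not; in fact flat vectors supported where $\lambda$ is small have \emph{tiny} variance, so flatness works against you, not for you. Quantitatively, each failure probability satisfies $\mathbb{P}\bigl(|x_n^*(y_2)|<1\bigr)\ge c\min\bigl(1,(a_n\|\lambda\|_p)^{-1}\bigr)$, so the Borel--Cantelli summability you invoke forces $\sum_n a_n^{-1}<\infty$: a diagonal Gaussian witness with fixed $\ell_p$ weights can never detect any exponent beyond $1$, which is exactly the content of Theorem~\ref{theo-Kad-Ball} and gives nothing toward $s>1$. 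Dyadic blocking does not rescue this, since the per-block weights would have to grow like $2^{j(s-1)}$ to suppress the $|N_j|\sim 2^{js}$ failure events, and such witnesses cannot be recombined into finitely many elements of $\ell_p$.

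The peaky branch also does not separate as written. With threshold $\tau_n\downarrow0$, the condition $\|p_n\|_{p'}\ge c$ only guarantees a largest coordinate of size about $c\tau_n$, so for $y_1\approx\sum_m a_m^{-1}e_{k_m}$ the main term of $x_n^*(y_1)$ is $u_{n,k_n}$, of modulus possibly as small as $c\tau_n\to 0$, not $\ge1$. Repairing this forces coefficients $(a_n|u_{n,k_n}|)^{-1}$, whose $\ell_p$-summability requires peaks of size at least a negative power of $a_n$ (roughly $a_n^{-(p-s)/p}$), and the indices excluded by that stronger peakiness condition are thrown back onto the Gaussian branch, which, as explained above, cannot absorb them. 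So the gap is not a matter of calibration or cross-term bookkeeping: the anti-concentration estimate your scheme needs is false for a fixed diagonal Gaussian on $\ell_p$, and a genuinely different argument --- precisely what Shkarin's Proposition~5.3 supplies, and what Bayart's Gaussian-measure polarization approach in \cite{bay} generalizes --- is required at exactly this point.
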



\section{ $(w^*, E^*, \F)$-acceptable sequences} \label{sec_3}

In this section we already need some notation and basic results about filters. In order to make the reading more comfortable, we repeat below with minor modifications the corresponding introductory part from \cite[Section 3]{kaleor}. 

A \textit{filter} $\F$ on a set $N$ is a non-empty
collection of subsets of $N$ satisfying the following axioms: $\emptyset
\notin \F$; if $A,B \in \F$ then $A \cap B \in \F$; and for every $A \in \F$ if $B \supset A$ then $B \in \F$. All over the paper we consider filters on ${\mathbb{N}}$.

The dual to the notion of filter is the notion of \textit{ideal}. An ideal $%
\mathcal{I}$ on ${\mathbb{N}}$ is a family of subsets of ${\mathbb{N}}$
closed under taking finite unions and subsets of its elements. Given a
filter $\F$ on ${\mathbb{N}}$ we have the corresponding ideal of
complements $\mathcal{I}_{\F}=\{{\mathbb{N}}\setminus A:\ A\in%
\F\}$ on ${\mathbb{N}}$. And vice versa the filter $\F_{%
\mathcal{I}}=\{{\mathbb{N}}\setminus A:\ A\in\mathcal{I}\}$ corresponds to a
given ideal $\mathcal{I}$. The elements of $\mathcal{I}_{\F}$ are
called $\F$-\textit{negligible}. Sometimes it is more convenient to
present a filter by pointing out its ideal.

Let $X$ be a topological space (in our paper it will usually be a Banach space equipped with one of the standard in Functional Analysis topologies). A sequence $(x_n) \subset X$, $n \in  \N$,  is said to be $\F$-\textit{convergent} to $x$ if for every neighborhood $U$
of $x$ the set $\{n \in \N: x_n \in U\}$ belongs to $\F$
(equivalently $\{n \in \N: x_n \notin U\} \in \mathcal{I}_{%
\F}$). We write this as $x=\flim x_n$, or $x_n \to_\F x$, or, if the variable should be pointed explicitly, $x=\F
\text{-}\lim_n x_n$.

 In particular if one takes as $\F$ the filter whose
ideal consists of finite sets (the \textit{Fr\'echet filter}), then $%
\F$-convergence coincides with the ordinary one.

The natural ordering on the set of filters on $\N$ is defined as
follows: $\F_1 \succ \F_2$ if $\F_1 \supset
\F_2$.
A filter $\F$ on $\N$ is said to be \textit{free} if it
dominates the Fr\'echet filter. Below we deal only with free filters. In this case every ordinary convergent sequence is automatically $\F$-convergent.

A subset of $\N$ is called \textit{stationary} with respect to $%
\F$ (or just $\F$-stationary) if it has nonempty
intersection with each member of the filter. In other words, an $A \subset {%
\mathbb{N}}$ is $\F$-stationary if and only if it does not belong
to $\mathcal{I}_{\F}$. Denote the collection of all $\F$%
-stationary sets by $\F^*$. 
For an $I \in \F^*$ we call
the collection of sets $\{A\cap I:\ A\in\F\}$ \textit{the trace of $%
\F$ on $I$} (which is evidently a filter on $I$), and by $\F(I)$ we denote the filter on $\N$ generated by the trace of $\F$ on $I$. Clearly $\F(I)$ dominates $\F$. 
Any subset of $\N$ is either a member of $\F$, or a member of $\mathcal{I}_{\F}$, or the set and its complement are both $\F$-stationary sets.

\begin{theorem}[{\cite[Theorem 1.1]{shur}}] \label{stationary-thm} 
Let $X$ be topological space, $x_n, x \in
X$ and let $\F$ be a filter on $\N$. Then the following
conditions are equivalent

\begin{enumerate}
\item[(i)] $(x_n)$ is $\F$-convergent to $x$;

\item[(ii)] $(x_n)$ is $\F(I)$-convergent to $x$ for every $I \in
\F^*$;

\item[(iii)] $x$ is a cluster point of $(x_n)_{n \in I}$ for every $I \in
\F^*$.
\end{enumerate}
\end{theorem}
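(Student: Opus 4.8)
The plan is to prove the cycle of implications (i) $\Rightarrow$ (ii) $\Rightarrow$ (iii) $\Rightarrow$ (i). Two elementary remarks about the filters $\F(I)$ trivialise the first step: first, $\F(I) \succ \F$ for every $I \in \F^*$ (already noted above), and second $\F(\N) = \F$, since $\N \in \F^*$ and the trace of $\F$ on $\N$ is just $\F$. For Step 1, if $(x_n)$ is $\F$-convergent to $x$ then for each neighbourhood $U$ of $x$ the set $\{n : x_n \in U\}$ belongs to $\F$, hence to the finer filter $\F(I)$ for every $I \in \F^*$; thus $(x_n)$ is $\F(I)$-convergent to $x$ for every $I \in \F^*$, which is (ii).

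For Step 2, observe that (ii) already contains (i): specialise to $I = \N$ and use $\F(\N) = \F$. Hence it suffices to derive (iii) from (i). Fix $I \in \F^*$ and a neighbourhood $U$ of $x$, and set $A := \{n : x_n \in U\}$, which lies in $\F$ by (i). For any $m \in \N$, the set $F_m := \{n \in \N : n \ge m\}$ is cofinite, hence belongs to the free filter $\F$, so $A \cap F_m \in \F$; since $I$ is $\F$-stationary, $I \cap A \cap F_m \neq \emptyset$, i.e. there is $n \in I$ with $n \ge m$ and $x_n \in U$. As $m$ is arbitrary, $\{n \in I : x_n \in U\}$ is cofinal in $I$, so $x$ is a cluster point of $(x_n)_{n \in I}$.

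For Step 3 we argue by contraposition. Assume $(x_n)$ is not $\F$-convergent to $x$, so there is a neighbourhood $U$ of $x$ with $A := \{n : x_n \in U\} \notin \F$. By the trichotomy recalled above (any subset of $\N$ lies in $\F$, or in $\mathcal{I}_\F$, or is $\F$-stationary together with its complement), either $B := \N \setminus A \in \F$, or both $A$ and $B$ are $\F$-stationary; in both cases $B \in \F^*$, because every member of $\F$ is $\F$-stationary ($\F$ being closed under intersections and missing $\emptyset$). Put $I := B$. Then $x_n \notin U$ for \emph{every} $n \in I$, so $x$ is certainly not a cluster point of $(x_n)_{n \in I}$, and (iii) fails. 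This closes the cycle.

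The proof is short and there is no serious obstacle; the one place to be attentive is Step 2, which is also the only spot where freeness of $\F$ is genuinely used. There one must strengthen the statement ``$A$ meets every $\F$-stationary set'' to ``$A$ meets every $\F$-stationary set in arbitrarily large indices'' — exactly the difference between $x$ being a \emph{value} attained by the restricted sequence $(x_n)_{n\in I}$ and $x$ being a genuine \emph{cluster point} of it. Everything else is routine unwinding of the definitions of $\F$-convergence, of the generated filter $\F(I)$, and of $\F$-stationarity.
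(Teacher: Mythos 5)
Your proof is correct and takes essentially the same route as the paper's: the decisive step is the contrapositive of (iii)$\Longrightarrow$(i), where your $I=\{n\in\N: x_n\notin U\}$ is exactly the stationary set the paper uses. The only difference is that you spell out the implications the paper calls evident, correctly flagging that freeness of $\F$ (the paper's standing assumption) is what upgrades ``meets every stationary set'' to ``meets it in arbitrarily large indices'', i.e.\ to a genuine cluster point.
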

\begin{proof} The proof is taken from \cite[Theorem 1.1]{shur} with misprints corrected.
Implications (i)$\Longrightarrow$(ii)
and (ii)$\Longrightarrow$(iii) are evident. Let us
prove that (iii)$\Longrightarrow$(i). Suppose $x_n$
do not $\F$-converge to $x$. Then there is  such a neighborhood
$U$ of $x$  that in each $A\in\F$ there is a $j \in A$ such that
$x_j \not\in U$. Consequently $I=\{j \in \N: x_j \not\in U\}$ is
stationary and at the same time $x$ is not a cluster point of $(x_n)_{n \in I}$. This contradicts the assumption (iii).
\end{proof}
More about filters, ultrafilters and their applications one can find in advanced General Topology textbooks, for example in \cite{tod}.

\vspace{1 mm} 

Now, let us start the main part of the section. 

Let $E$ be an infinite-dimensional Banach space, and $\F$ be a free filter on $\N$.
\begin{definition} \label{def-w*-F-accept}
 A sequence $(a_n)_{n \in \N}$ of positive reals is said to be \emph{$(w^*, E^*, \F)$-acceptable} if there is a sequence $(x_n^*)_{n \in \N} \subset E^*$, $\|x_n^*\|=a_n$, for which $\flim  x_n^*(x)=0$ for each $x \in E$ (or, in other words, $\flim x_n^* = 0$ in the $w^*$ topology $\sigma(E^*, E)$). Denote $\mathcal A(w^*, E^*, \F)$ the collection of all $(w^*, E^*, \F)$-acceptable sequences. 
\end{definition}

Our first result on $(w^*, E^*, \F)$-acceptability follows from Theorem \ref{theo-Kad-Ball}.

\begin{theorem} \label{thmA(gener)}
For every  infinite-dimensional Banach space $E$ and every free filter $\F$ on $\N$ every  $(w^*, E^*, \F)$-acceptable sequence $(a_n)_{n \in \N}$ satisfies the following condition:

\centerline{ $\sum\limits_{n \in I} a_n^{-1} = \infty$ for every $I \in \F^*$.}
\end{theorem}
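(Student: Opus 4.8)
The plan is to reduce the statement, for a fixed $I \in \F^*$, to the already-established Theorem \ref{theo-Kad-Ball} applied to the space $X = E$. The bridge between filter convergence and ordinary cluster points is Theorem \ref{stationary-thm}.

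First I would fix a witnessing sequence $(x_n^*)_{n \in \N} \subset E^*$ with $\|x_n^*\| = a_n$ and $\F\text{-}\lim_n x_n^* = 0$ in $\sigma(E^*, E)$, which exists by $(w^*, E^*, \F)$-acceptability. Then I would fix an arbitrary $I \in \F^*$ and invoke the implication (i)$\Longrightarrow$(iii) of Theorem \ref{stationary-thm} (with the topological space taken to be $E^*$ equipped with $\sigma(E^*, E)$): since $(x_n^*)$ is $\F$-convergent to $0$, the element $0$ is a cluster point of the restricted family $(x_n^*)_{n \in I}$ in the $w^*$ topology.

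Next I would reindex $(x_n^*)_{n \in I}$ as a genuine sequence, say $(y_k^*)_{k \in \N}$ with $y_k^*$ the $k$-th term of $(x_n^*)_{n \in I}$; its sequence of norms is precisely $(a_n)_{n \in I}$ and, by the previous step, $0$ is a $\sigma(E^*, E)$-cluster point of $(y_k^*)$. Here one only needs to observe that "cluster point of the subfamily indexed by $I$" is exactly "cluster point of the reindexed sequence", which is immediate from the definition of cluster point (every $w^*$-neighborhood of $0$ contains $x_n^*$ for infinitely many $n \in I$). Hence the sequence $(a_n)_{n \in I}$ is $(w^*, E^*)$-acceptable in the sense of Definition \ref{def-w*accept}.

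Finally I would apply Theorem \ref{theo-Kad-Ball} with $X = E$ to conclude $\sum_{n \in I} a_n^{-1} = \infty$. Since $I \in \F^*$ was arbitrary, this proves the claim. I do not anticipate a real obstacle here: the argument is a direct composition of Theorem \ref{stationary-thm} and Theorem \ref{theo-Kad-Ball}, and the only point requiring a line of care is the harmless identification of a cluster point along an $\F$-stationary index set with a cluster point of the corresponding subsequence.
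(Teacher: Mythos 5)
Your proposal is correct and is essentially the paper's own proof: fix the witnessing sequence, use Theorem \ref{stationary-thm} to see that $0$ is a $w^*$-cluster point of $(x_n^*)_{n\in I}$ for each $I\in\F^*$, enumerate $I$ to get a $(w^*,E^*)$-acceptable sequence, and apply Theorem \ref{theo-Kad-Ball}. No difference in substance from the argument in the paper.
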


\begin{proof}
According to the defininition,  there is a sequence $(x_n^*)_{n \in \N} \subset E^*$ with $\|x_n^*\|=a_n$ for which  $\flim x_n^*= 0$ in the  topology $\sigma(E^*, E)$. Then (Theorem \ref{stationary-thm}), for every $I \in \F^*$, zero is a $\sigma(E^*, E)$-cluster point of  $(x_n)_{n \in I}$ for every $I \in \F^*$. This means that, enumerating $I$ as $I=\{n_1, n_2, \dots\}$, we get an  $(w^*, E^*)$-acceptable sequence $( \|x_{n_k}^*\|)_{k \in \N}=(a_{n_k})_{k \in \N}$. To conclude the proof it remains to apply  Theorem \ref{theo-Kad-Ball} to the sequence $(a_{n_k})_{k \in \N}$:  $\sum\limits_{n \in I} a_n^{-1} = \sum\limits_{k \in \N} a_{n_k}^{-1} = \infty$.
\end{proof}

It happens that at least for one space $E$ the condition from the previous theorem is not only a necessary condition of  $(w^*, E^*, \F)$-acceptability, but also a sufficient one.

\begin{theorem}\label{thmA(ellinf)}
Let  $(e_n)$ be the canonical basis of $c_0$, $\F$ be a free filter on $\N$, and  $(a_n)_{n \in \N}$ be a sequence of positive reals. Then the following conditions are equivalent

\begin{enumerate}
\item[(i)]  $(a_n)_{n \in \N} \in \mathcal A(w^*, (\ell_1)^*, \F)$;

\item[(ii)] $\flim a_n e_n = 0$ in the $w^*$-topology of $\ell_\infty = (\ell_1)^*$;

\item[(iii)]  for every $I \in \F^*$ zero is a $\sigma(\ell_\infty, \ell_1)$-cluster point of  $(a_n e_n)_{n \in I} \subset \ell_\infty$;

\item[(iv)]  for every $I \in \F^*$
$$
\sum\limits_{n \in I} a_n^{-1} = \infty.
$$
\end{enumerate}
\end{theorem}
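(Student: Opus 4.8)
The plan is to close the cycle of implications among (i)–(iv). The implication (i)$\Longrightarrow$(ii) is immediate from the definition of $(w^*,(\ell_1)^*,\F)$-acceptability once we realise that we are free to choose the witnessing sequence $(x_n^*)$; but here it is easier to establish (ii)$\Longrightarrow$(i) trivially (the sequence $(a_n e_n)$ is itself a witness, since $\|a_n e_n\|_{\ell_\infty}=a_n$), and (i)$\Longrightarrow$(iv) from Theorem \ref{thmA(gener)} applied with $E=\ell_1$. The equivalence (ii)$\Longleftrightarrow$(iii) is just Theorem \ref{stationary-thm} specialised to the topological space $(\ell_\infty,\sigma(\ell_\infty,\ell_1))$ and the sequence $(a_n e_n)$. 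So the real content is reduced to a single arrow.

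That arrow is (iv)$\Longrightarrow$(iii) (equivalently (iv)$\Longrightarrow$(ii)): assuming $\sum_{n\in I}a_n^{-1}=\infty$ for every $I\in\F^*$, we must produce, for each fixed $I\in\F^*$, the fact that $0$ is a $\sigma(\ell_\infty,\ell_1)$-cluster point of $(a_n e_n)_{n\in I}$. The idea is to imitate the proof of Lemma \ref{lem1}: fix finitely many $x^{(1)},\dots,x^{(s)}\in\ell_1$ and $\eps>0$; we need some $m\in I$ with $\max_{1\le k\le s}|a_m e_m(x^{(k)})|<\eps$, i.e. $a_m\max_k|x^{(k)}_m|<\eps$. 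Suppose not; then $a_m\sum_{k=1}^s|x^{(k)}_m|\ge \eps$ for every $m\in I$, hence $\sum_{k=1}^s|x^{(k)}_m|\ge \eps a_m^{-1}$ for all $m\in I$. Summing over $m\in I$ and using $\sum_{m\in I}a_m^{-1}=\infty$ gives $\sum_{m\in I}\sum_{k=1}^s|x^{(k)}_m|=\infty$; but the left side is $\sum_{k=1}^s\sum_{m\in I}|x^{(k)}_m|\le\sum_{k=1}^s\|x^{(k)}\|_{\ell_1}<\infty$, a contradiction. Invoking Theorem \ref{stationary-thm} then upgrades the clusterhood on every $I\in\F^*$ to genuine $\F$-convergence to $0$, which is (ii).

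The main obstacle — if there is one — is purely bookkeeping: making sure the neighborhood basis of $0$ in $\sigma(\ell_\infty,\ell_1)$ is written correctly (basic neighborhoods are $\{y:|\langle y,x^{(k)}\rangle|<\eps,\ k=1,\dots,s\}$ for finite families $x^{(k)}\in\ell_1$), and that "cluster point of $(a_n e_n)_{n\in I}$" means every such neighborhood is hit by $a_m e_m$ for some $m\in I$ — so the quantifier over $m$ ranges in $I$, which is exactly what the hypothesis $\sum_{n\in I}a_n^{-1}=\infty$ feeds. I do not expect any genuine difficulty; the argument is a clean transcription of Lemma \ref{lem1} with $g_n=e_n$, $C=1$, the range $\N$ replaced by $I$, and an extra $\eps$ to pass from the $\ell_1$-sum bound to the $\ell_\infty$-coordinate bound. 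One could alternatively phrase (iv)$\Longrightarrow$(iii) by directly citing Lemma \ref{lem1} applied to the restricted sequence $(e_n)_{n\in I}\subset c_0\subset\ell_\infty=\ell_1^*$ after checking condition (A) holds with $C=1$ on $\ell_1$, exactly as in the proof of Theorem \ref{theorE*=ell_inf}; this avoids rewriting the estimate.
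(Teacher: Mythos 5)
Your proposal is correct and follows essentially the same route as the paper: the cycle (i)$\Rightarrow$(iv) via Theorem \ref{thmA(gener)}, (iv)$\Rightarrow$(iii) via the Lemma \ref{lem1}/Theorem \ref{theorE*=ell_inf} mechanism applied along each $I\in\F^*$, (iii)$\Rightarrow$(ii) via Theorem \ref{stationary-thm}, and (ii)$\Rightarrow$(i) directly from Definition \ref{def-w*-F-accept}. Your written-out estimate for (iv)$\Rightarrow$(iii) is just a self-contained (and slightly streamlined) version of the paper's citation of Theorem \ref{theorE*=ell_inf}, so no substantive difference.
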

\begin{proof}
The implication (i)$\Longrightarrow$(iv) is a particular case of Theorem \ref{thmA(gener)}, the implication (iv)$\Longrightarrow$(iii) is a part of Theorem \ref{theorE*=ell_inf}, (iii)$\Longrightarrow$(ii) because of Theorem \ref{stationary-thm}, and the last implication (ii)$\Longrightarrow$(i) is a consequence of Definition \ref{def-w*-F-accept}. 
\end{proof}

\begin{remark} \label{remthmA(ellinf)}
By the same reason as above, the equivalence   (i)$\Longleftrightarrow$(iv) works for every space $E$ for which $E^*$ contains an isomorphic copy of $c_0$. It remains unclear for us whether this equivalence remains valid for all spaces $E$ for which $\ell_\infty$ is finitely representable in $E^*$.
\end{remark}

Even though for some spaces the necessary condition from  Theorem \ref{thmA(gener)} is also a sufficient one, this is not always the case. For example, this does not work for $E = \ell_2$. Moreover, 
for $\ell_2$, and for every  infinite-dimensional separable Hilbert space as well, the dual space is canonically identified with the original one and the $w^*$-topology is the same as the weak topology, so the case of $E=\ell_2$ is covered by the corresponding results from \cite{kaleor} that was formulated originally in terms of the weak topology.

\begin{theorem}[{\cite[Theorem 3.4]{kaleor}}] \label{thmA(H)}
Let $H$ be an infinite-dimensional separable Hilbert space, $(e_n) \subset H$ be an orthonormal basis of $H$, $\F$ be a free filter on $\N$, and  $(a_n)_{n \in \N}$ be a sequence of positive reals. Then the following conditions are equivalent

\begin{enumerate}
\item[(i)]  $(a_n)_{n \in \N} \in \mathcal A(w^*, H^*, \F)$;

\item[(ii)] for every $h \in H$
$$
\flim \langle a_n e_n, h \rangle = 0;
$$

\item[(iii)]  for every $I \in \F^*$
$$
\sum\limits_{n \in I} a_n^{-2} = \infty.
$$
\end{enumerate}
\end{theorem}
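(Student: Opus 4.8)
The plan is to establish the cycle of implications (i)$\Longrightarrow$(iii)$\Longrightarrow$(ii)$\Longrightarrow$(i). The two tools that do all the work are Theorem \ref{stationary-thm}, which turns every statement about $\F$-convergence into a family of cluster-point statements indexed by $\F^*$, and Theorem \ref{theo-Kad-cyl1}, which for a separable Hilbert space equates $(w^*,H^*)$-acceptability, the existence of $0$ as a weak cluster point of $(a_ne_n)$, and the divergence of $\sum a_n^{-2}$. The analytic content of the theorem (Ball's complex plank theorem) is packaged inside Theorem \ref{theo-Kad-cyl1}, which I invoke as a black box; nothing beyond it is needed.

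\emph{(i)$\Longrightarrow$(iii).} Fix a sequence $(x_n^*)_{n\in\N}\subset H^*$ with $\|x_n^*\|=a_n$ and $\flim x_n^*=0$ in $\sigma(H^*,H)$, and fix $I\in\F^*$. Since $\F$ is free, every finite set lies in $\mathcal I_\F$, so $I$ is infinite; write $I=\{n_1<n_2<\dots\}$. By Theorem \ref{stationary-thm}, $0$ is a $w^*$-cluster point of $(x_n^*)_{n\in I}$, which is precisely the assertion that the sequence $(a_{n_k})_{k\in\N}=(\|x_{n_k}^*\|)_{k\in\N}$ is $(w^*,H^*)$-acceptable. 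The implication (i)$\Longrightarrow$(iii) of Theorem \ref{theo-Kad-cyl1} then gives $\sum_{n\in I}a_n^{-2}=\sum_k a_{n_k}^{-2}=\infty$.

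\emph{(iii)$\Longrightarrow$(ii).} Fix $I\in\F^*$ and write again $I=\{n_1<n_2<\dots\}$. The subsequence $(e_{n_k})_k$ is an orthonormal basis of the closed, infinite-dimensional subspace $H_I:=\overline{\mathrm{span}}\{e_{n_k}:k\in\N\}$, itself a separable Hilbert space. By hypothesis $\sum_k a_{n_k}^{-2}=\infty$, so the implication (iii)$\Longrightarrow$(ii) of Theorem \ref{theo-Kad-cyl1}, applied inside $H_I$, shows that $0$ is a weak cluster point of $(a_{n_k}e_{n_k})_k$ in $H_I$. The weak topology of $H_I$ is the restriction to $H_I$ of the weak topology of $H$ (a standard Hahn--Banach fact), hence $0$ is a weak cluster point of $(a_ne_n)_{n\in I}$ in $H$ as well. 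As $I\in\F^*$ was arbitrary, Theorem \ref{stationary-thm} yields $\flim a_ne_n=0$ weakly, i.e.\ $\flim\langle a_ne_n,h\rangle=0$ for every $h\in H$.

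\emph{(ii)$\Longrightarrow$(i)} is immediate: put $x_n^*:=a_ne_n$, viewed in $H^*$ under the canonical identification $H^*=H$; then $\|x_n^*\|=a_n$ and, by (ii), $\flim x_n^*(h)=0$ for every $h\in H$, so $(a_n)_{n\in\N}\in\mathcal A(w^*,H^*,\F)$. Among these steps I expect (iii)$\Longrightarrow$(ii) to demand the most care, since it is the one in which the series condition has to be promoted back to a cluster-point statement valid on \emph{every} stationary set; but once Theorem \ref{theo-Kad-cyl1} is at our disposal, the only obstacle there is the bookkeeping of subsequences and the identification of the weak topology on $H_I$ — the genuine difficulty of the theorem, namely that $\sum a_n^{-2}=\infty$ actually forces a weak cluster point, is entirely absorbed into Theorem \ref{theo-Kad-cyl1}.
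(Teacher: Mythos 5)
Your proof is correct: each implication checks out, including the subspace step in (iii)$\Longrightarrow$(ii), where restricting to $H_I=\overline{\mathrm{span}}\{e_n : n\in I\}$ and using that the weak topology of a closed subspace is the restriction of the ambient weak topology is legitimate. The paper itself gives no proof of this statement (it is quoted from \cite{kaleor}), but your argument follows exactly the template the paper uses for the analogous Theorem \ref{thmA(ellinf)} — transfer between filter limits and cluster points on stationary sets via Theorem \ref{stationary-thm}, combined with the acceptability characterization of Theorem \ref{theo-Kad-cyl1} — so it is essentially the intended approach.
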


Because of this, the description of $\mathcal A(w^*, E^*, \F)$ for each concrete space $E$ is a separate problem that (at least for us) looks interesting. Remark that this problem remains open even for $E=\ell_p$, $p \in (1, 2) \cup (2, \infty)$. From theorems \ref{theorE*=ell_p} and \ref{theorE*=ell_p++}, following the lines of the proof of Theorem \ref{thmA(ellinf)}, one can get necessary conditions and sufficient conditions that do not coincide but are relatively close one to the other. 

\begin{theorem}  \label{theorE*=ell_pforF}
Let $1<p<2$,  $\F$ be a free filter on $\N$ and $(a_n)_{n \in \N}$ be a sequence of positive reals. Then 
\begin{enumerate}
\item[--]  if  for every $I \in \F^*$
$$
\sum\limits_{n \in I} a_n^{-p} = \infty
$$
then  $(a_n)_{n \in \N} \in \mathcal A(w^*, \ell_p^*, \F)$;

\item[--]  if $(a_n)_{n \in \N} \in \mathcal A(w^*, \ell_p^*, \F)$ then for every $1<s<p$ and  every $I \in \F^*$
$$
\sum\limits_{n \in I} a_n^{-s} = \infty.
$$
\end{enumerate}
\end{theorem}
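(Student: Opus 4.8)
The plan is to imitate, almost verbatim, the proof of Theorem \ref{thmA(ellinf)}, replacing the two ``$\ell_\infty$-ingredients'' used there (Theorems \ref{theorE*=ell_inf} and \ref{theo-Kad-Ball}) by their $\ell_p$-counterparts: Theorem \ref{theorE*=ell_p} for the first (sufficiency) assertion and Theorem \ref{theorE*=ell_p++} for the second (necessity) one. Throughout I identify $\ell_p^{*}$ with $\ell_{p'}$, where $\tfrac1p+\tfrac1{p'}=1$, and I write $(e_n)$ for the canonical basis of $\ell_{p'}$, so that $\|e_n\|=1$.

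For the sufficiency part I would assume $\sum_{n\in I}a_n^{-p}=\infty$ for every $I\in\F^{*}$ and set $x_n^{*}:=a_n e_n\in\ell_{p'}$, so that $\|x_n^{*}\|=a_n$. By Definition \ref{def-w*-F-accept} it suffices to prove $\flim x_n^{*}=0$ in $\sigma(\ell_{p'},\ell_p)$, and by Theorem \ref{stationary-thm}, implication (iii)$\Rightarrow$(i), this reduces to checking, for each fixed $I\in\F^{*}$, that $0$ is a $\sigma(\ell_{p'},\ell_p)$-cluster point of $(a_n e_n)_{n\in I}$. Enumerating $I=\{n_1<n_2<\cdots\}$, the hypothesis gives $\sum_{k}a_{n_k}^{-p}=\infty$, so Theorem \ref{theorE*=ell_p} (implication (B)$\Rightarrow$(A), applied inside the closed span of $(e_n)_{n\in I}$) produces the required cluster point. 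This is exactly the analogue of the chain (iv)$\Rightarrow$(iii)$\Rightarrow$(ii)$\Rightarrow$(i) in the proof of Theorem \ref{thmA(ellinf)}.

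For the necessity part I would take $(a_n)_{n\in\N}\in\mathcal{A}(w^{*},\ell_p^{*},\F)$, witnessed by $(x_n^{*})\subset\ell_p^{*}$ with $\|x_n^{*}\|=a_n$ and $\flim x_n^{*}=0$ in the weak-star topology, then fix $I\in\F^{*}$ and enumerate it as $I=\{n_1<n_2<\cdots\}$. By Theorem \ref{stationary-thm}, implication (i)$\Rightarrow$(iii), $0$ is a $\sigma(\ell_p^{*},\ell_p)$-cluster point of $(x_{n_k}^{*})_{k}$, i.e.\ the sequence $(a_{n_k})_{k}=(\|x_{n_k}^{*}\|)_{k}$ is $(w^{*},\ell_p^{*})$-acceptable. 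Theorem \ref{theorE*=ell_p++} then gives $\sum_{k}a_{n_k}^{-s}=\infty$ for every $1<s<p$, that is $\sum_{n\in I}a_n^{-s}=\infty$, as wanted.

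The only point requiring a word of care — the ``hard part'', such as it is — is the transfer, in the sufficiency part, of Theorem \ref{theorE*=ell_p} from the full canonical basis to the sub-collection $(e_n)_{n\in I}$. This is harmless: the closed linear span $Y$ of $(e_n)_{n\in I}$ in $\ell_{p'}$ is isometric to $\ell_{p'}$, its dual is naturally the span of the corresponding coordinate functionals in $\ell_p$, and since every continuous functional on $Y$ extends to $\ell_p$, the relative topology $\sigma(\ell_{p'},\ell_p)|_Y$ coincides with the weak topology of $Y$; hence a weak cluster point of a sequence lying in $Y$ is a $\sigma(\ell_{p'},\ell_p)$-cluster point in $\ell_{p'}$. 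Apart from this bookkeeping, everything is a direct quotation of the cited theorems, so I expect no genuine difficulty. The same remark is implicitly used in the proof of Theorem \ref{thmA(ellinf)} when invoking Theorem \ref{theorE*=ell_inf}.
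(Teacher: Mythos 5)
Your proposal is correct and is exactly the argument the paper intends: the paper only sketches this theorem as ``from Theorems \ref{theorE*=ell_p} and \ref{theorE*=ell_p++}, following the lines of the proof of Theorem \ref{thmA(ellinf)}'', and you have filled in that sketch faithfully, using Theorem \ref{stationary-thm} to reduce to stationary sets, Theorem \ref{theorE*=ell_p} for sufficiency and Theorem \ref{theorE*=ell_p++} for necessity. Your extra remark on transferring the cluster-point statement to the sub-basis $(e_n)_{n\in I}$ (via the isometric copy of $\ell_{p'}$ and reflexivity, or equivalently via the weak$^*$-continuous embedding adjoint to the coordinate restriction $\ell_p\to\ell_p$) is sound and correctly identifies the only point needing care.
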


\begin{definition} \label{def-p-admis}
Let $1 \le p \le 2$ and  $\F$ be a free filter on $\N$. A sequence $(a_n)_{n \in \N}$ of positive reals is said to be \emph{$(\F, p)$-admissible} if for every $I \in \F^*$
$$
\sum\limits_{n \in I} a_n^{-p} = \infty.
$$
Denote $\ADM_p(\F)$ the collection of all $(\F, p)$-admissible sequences.
\end{definition}

In this terminology, Theorems \ref{thmA(H)}, \ref{thmA(ellinf)} and \ref{theorE*=ell_pforF} can be summarized as follows: $\mathcal A(w^*, \ell_2^*, \F) = \ADM_2(\F)$,  $\mathcal A(w^*, \ell_1^*, \F) = \ADM_1(\F)$ and  $\mathcal A(w^*, \ell_p^*, \F)$ for $1<p<2$ is ``close'' to $\ADM_p(\F)$.

Remark that, for a given filter $\F$ on $\N$, one
needs a substantial amount of work to decide whether a sequence  $(a_n)_{n \in \N}$ is or is not $(\F, p)$-admissible. For $(\F, 2)$-admissibility, this work has been performed in \cite[Sections 4 and 5]{kaleor} for several important filters, including the filter of statistical convergence $\F_{st}$. For example, \cite [Corollary 4.3]{kaleor} says that a non-decreasing sequence $(a_n) \subset {\mathbb{R}}^+$ is $(\F_{st}, 2)$-admissible if and only if
\begin{equation}  \label{a-mon}
\sup_n \frac{a_{n}}{n^{1/2}} < \infty.
\end{equation}
Analogous statements for $p \neq 2$ were not considered in \cite{kaleor} but they can be deduced the same way.  


\section{Partial sum operators with respect to an $\F$-basis} \label{sec_4}

In the previous sections we introduced the necessary tools and now we are ready for the main subject. As we noticed in the Introduction, we are going to figure out how fast the norms of partial sum operators with respect to an $\F$-basis can tend to infinity, depending on the Banach space $X$ and on the free filter $\F$ on $\N$. We already used many times the notation $(e_n)$ for the canonical basis of $\ell_p$ or $c_0$. In order to avoid possible confusion, in this section we switch to other letters like $u_n$ or $v_n$ for an $\F$-basis. We study only those $\F$-bases $(u_n)$ for which the corresponding coordinate functionals $(u_n^*)$ are continuous. The action of  the corresponding partial sums operators $S_n$ can be expressed by the formula
$$
S_n(x)=\sum_{k=1}^n u_k^*(x)u_n.
$$
The operators $S_n$ are projections on their images $\textrm{span}\{u_k\}_{k=1}^n$, consequently $\|S_n\| \ge 1$. The reminders $R_n = \Id - S_n$ are projections as well, so $\|R_n\| \ge 1$. By the triangle inequality, $\left|\|R_n\| - \|S_n\|\right| \le \|\Id\| = 1$. Together, this implies that 
\begin{equation} \label{ineqSnRn}
\|R_n\| \le \|S_n\| +1 \le 2\|S_n\| \textrm{ and } \|S_n\| \le \|R_n\| +1 \le 2\|R_n\|.
\end{equation}

\begin{theorem}  \label{f-op-thm1} Let $X, Y$ be Banach spaces, $\bar a = (a_n)_{n \in {\mathbb{N}}}$ be a sequence of positive reals and $\F$ be a free filter on $\N$.
Then the following two statements are equivalent:

\begin{itemize}
\item[(1)] There is a sequence of functionals $x_n^* \in X^*$ such that $\|x_n^*\|=a_n$, $n \in {\mathbb{N}}$, and for every $x \in X$
$$
\flim x_n^*(x) = 0.
$$

\item[(2)] There is a sequence of operators $T_n \in L(X,Y)$, $n \in {\mathbb{N}}$, such that $\|T_n\|=a_n$, $n \in {\mathbb{N}}$, and for every $x \in X$
$$
\flim T_n(x) = 0.
$$
\end{itemize}
\end{theorem}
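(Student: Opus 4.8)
The plan is to prove $(1)\Longrightarrow(2)$ trivially and concentrate on $(2)\Longrightarrow(1)$. For $(1)\Longrightarrow(2)$, pick any norm-one $y_0 \in Y$ and set $T_n(x) = x_n^*(x)\,y_0$; then $\|T_n\| = \|x_n^*\| = a_n$ and $\flim T_n(x) = (\flim x_n^*(x))\,y_0 = 0$ for every $x$. So the content is entirely in producing scalar functionals of the prescribed norms out of operators of the prescribed norms.

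For $(2)\Longrightarrow(1)$, start from the operators $T_n \in L(X,Y)$ with $\|T_n\| = a_n$ and $\flim T_n(x) = 0$ for all $x$. For each $n$ choose, by definition of the operator norm, a unit vector $x_n \in X$ with $\|T_n x_n\| > a_n - 1/n$, and then a norm-one functional $y_n^* \in Y^*$ with $y_n^*(T_n x_n) = \|T_n x_n\|$. The natural candidate is $z_n^* := T_n^* y_n^* \in X^*$. Then $\|z_n^*\| \le \|T_n^*\|\,\|y_n^*\| = a_n$, and $\|z_n^*\| \ge |z_n^*(x_n)| = |y_n^*(T_n x_n)| > a_n - 1/n$, so $\|z_n^*\| \to a_n$ but is not exactly $a_n$; more importantly, for every $x \in X$ we have $|z_n^*(x)| = |y_n^*(T_n x)| \le \|T_n x\| \to_\F 0$, so $\flim z_n^*(x) = 0$. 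Thus $(z_n^*)$ is a $w^*$-null sequence (along $\F$) whose norms approximate $\bar a$ from below but may fall short by up to $1/n$.

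The remaining, and only delicate, point is to \emph{correct the norms up to exactly $a_n$} without destroying the $w^*$-null property along $\F$. Here is where one uses the infinite-dimensionality of $X$ (hence of $X^*$): fix a norm-one functional $f^* \in X^*$; by a small-perturbation argument (e.g.\ a Hahn--Banach/Helly-type construction, or simply using that the unit sphere of $X^*$ contains points close to $z_n^*/\|z_n^*\|$ in the appropriate sense) one wants to add to $z_n^*$ a correction of norm $\le 1/n$ that brings the total norm up to $a_n$; the cleanest route is to first replace $x_n$ and $y_n^*$ by an exact norming pair, which exists once we note that we may instead take $x_n^* := \alpha_n z_n^*$ is impossible (it would scale the wrong way), so instead we set $x_n^* := z_n^* + w_n^*$ where $w_n^*$ is chosen with $\|w_n^*\| \le a_n - \|z_n^*\| < 1/n$ and $w_n^*$ supported so that $\|z_n^* + w_n^*\| = \|z_n^*\| + \|w_n^*\| = a_n$ — this is possible because in any Banach space, given $z^*$ and a target $t > \|z^*\|$ with $t - \|z^*\|$ small, one can find $w^*$ of norm $t - \|z^*\|$ making the sum have norm exactly $t$ (take $w^*$ a small positive multiple of a functional norming the same direction, or invoke that the norm is a continuous surjection onto $[\|z^*\|,\infty)$ along the ray $z^* + \mathbb{R}_+ g^*$ for a suitable $g^*$). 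Since $\|x_n^* - z_n^*\| = \|w_n^*\| < 1/n \to 0$, we get $\flim x_n^*(x) = \flim z_n^*(x) = 0$ for every $x$, while $\|x_n^*\| = a_n$ exactly. I expect the main obstacle to be this norm-correction step: one must verify that the ``overshoot'' functional $w_n^*$ can always be chosen of norm exactly $a_n - \|z_n^*\|$ with the norms adding, in an \emph{arbitrary} Banach space $X$; the safe way is to choose $x_n$ with $\|T_n x_n\|$ much closer to $a_n$ than $1/n$ (replace $1/n$ by $\eps_n \downarrow 0$ as fast as we like) and then note that along the ray $z_n^* + s\,g_n^*$, where $g_n^*$ is a norm-one functional with $g_n^*(x_n) = 1$ (so that it is ``aligned'' with the norming direction of $z_n^*$ at $x_n$), the function $s \mapsto \|z_n^* + s g_n^*\|$ is continuous, equals $\|z_n^*\|$ at $s=0$, tends to $\infty$, hence hits $a_n$ for some $s = s_n$; and since $(z_n^* + s_n g_n^*)(x_n) \ge \|z_n^* x_n\| + s_n \to \|z_n^*+ s_n g_n^*\|$ we also control that $s_n \le 2\eps_n \to 0$, giving $\|x_n^* - z_n^*\| = s_n \to 0$ and preserving the $w^*$-$\F$-limit. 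This completes the equivalence.
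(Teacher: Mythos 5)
Your proposal is correct, and its backbone coincides with the paper's: pass to the adjoints $T_n^*$, pick (nearly) norming functionals $y_n^* \in S_{Y^*}$, and use the estimate $|\,(T_n^*y_n^*)(x)| = |y_n^*(T_nx)| \le \|T_nx\| \to_\F 0$. Where you diverge is the final norm-correction step, and here you make it harder than necessary: the paper simply takes $y_n^*$ with $\|T_n^*y_n^*\| > a_n/2$ and sets $x_n^* = \frac{a_n}{\|T_n^*y_n^*\|}\,T_n^*y_n^*$, which has norm exactly $a_n$, while the scaling factor is at most $2$, so $|x_n^*(x)| \le 2\|T_nx\| \to_\F 0$. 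Your aside that rescaling is ``impossible (it would scale the wrong way)'' is mistaken --- multiplicative rescaling is exactly the right move, and it makes the whole argument three lines. Your alternative additive correction does work in the form you finally give it: the intermediate suggestion that one can choose $w_n^*$ with $\|z_n^* + w_n^*\| = \|z_n^*\| + \|w_n^*\|$ is not justified in a general Banach space (it needs an alignment that need not exist), but the ``safe way'' via the ray $z_n^* + s\,g_n^*$ with $g_n^*(x_n)=1$, the intermediate value theorem, and the bound $s_n \le a_n - z_n^*(x_n) < \eps_n$ obtained by evaluating at $x_n$, is rigorous; since $s_n \to 0$ and $\F$ is free, the perturbation $s_n g_n^*(x)$ tends to $0$ and the $w^*$-$\F$-limit is preserved. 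So: same idea, correct proof, but a more laborious (near-norming pair plus IVT perturbation) route where a bounded rescaling suffices.
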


\begin{proof}
The implication (1)$\Longrightarrow$(2) is evident: it is sufficient to define $T_n$ by the rule $T_n(x) = x_n^*(x)y_0$, where $y_0 \in Y$ is an arbitrary fixed element of $\|y_0\| = 1$.

Now, let us demonstrate the implication (2)$\Longrightarrow$(1). Assume that $T_n$, $n \in {\mathbb{N}}$, are the operators from the item (2). Consider $T_n^* \in L(Y^*,X^*)$. Since  $\|T_n^*\| = \|T_n\|=a_n$, for each $n \in {\mathbb{N}}$ there is $y_n^* \in S_{Y^*}$ such that $\|T_n^*(y_n^*)\|  > \frac{a_n}{2}$. Let us denote 
$$
x_n^*=\frac{a_n}{\|T_n^*(y_n^*)\|}T_n^*(y_n^*)
$$
and check that these functionals $x_n^* \in X^*$ are what we need.

Evidently, $\|x_n^*\|=a_n$. Next,  for every $x \in X$
$$
\flim \left|x_n^*(x)\right| = \flim \frac{a_n}{\|T_n^*(y_n^*)\|}\left|(T_n^*(y_n^*))(x)\right|
$$
$$
= \flim \frac{a_n}{\|T_n^*(y_n^*)\|}\left|y_n^*(T_n x)\right|\leq \flim \frac{a_n}{\|T_n^*(y_n^*)\|}\left\|y_n^*\right\|\left\|T_n x\right\|
$$
$$
\leq 2\flim \left\|T_n x\right\| = 0.
$$
\end{proof}

\begin{corollary}  \label{cor-rem1}
Definition \ref{defF-bas} of $\F$-basis means that for every $x \in X$
$$
\flim R_n(x) = 0.
$$
So, the previous theorem implies that the norms of reminders $\|R_n\|$, $n \in \N$, form an $(w^*, X^*, \mathfrak F)$-acceptable sequence.
\end{corollary}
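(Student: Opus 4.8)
The plan is to read Corollary \ref{cor-rem1} as a direct consequence of Theorem \ref{f-op-thm1} applied to the particular sequence of operators $T_n := R_n = \Id - S_n \in L(X,X)$. So I would take $X = Y$ in Theorem \ref{f-op-thm1}.

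First I would observe that Definition \ref{defF-bas} of an $\F$-basis says precisely that for every $x \in X$ one has $\flim_n S_n(x) = x$, where $S_n(x) = \sum_{k=1}^n u_k^*(x) u_k$; here we are using that the coordinate functionals $u_k^*$ are continuous, so that the $S_n$ (hence the $R_n$) are genuine continuous operators and it makes sense to speak of their norms. Since $R_n = \Id - S_n$ and $\F$-limits are linear (the $\F$-limit of a sum is the sum of the $\F$-limits, when both exist, because the filter is free and addition is continuous on $X$), it follows that $\flim_n R_n(x) = x - x = 0$ for every $x \in X$. Thus the sequence $T_n := R_n$ satisfies the hypothesis of statement (2) in Theorem \ref{f-op-thm1} with $a_n := \|R_n\|$ (note $\|R_n\| \ge 1 > 0$, so these really are positive reals as required).

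Next I would invoke the implication (2)$\Longrightarrow$(1) of Theorem \ref{f-op-thm1}: there exists a sequence of functionals $x_n^* \in X^*$ with $\|x_n^*\| = \|R_n\|$ for all $n$ and $\flim_n x_n^*(x) = 0$ for every $x \in X$, i.e. $\flim_n x_n^* = 0$ in $\sigma(X^*, X)$. By Definition \ref{def-w*-F-accept} this is exactly the assertion that the sequence $(\|R_n\|)_{n \in \N}$ is $(w^*, X^*, \F)$-acceptable, which is the claim.

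There is essentially no obstacle here; the content is entirely in Theorem \ref{f-op-thm1}, and the only thing to be slightly careful about is the bookkeeping that the $R_n$ are continuous (which is part of our standing assumption that the coordinate functionals of the $\F$-bases under consideration are continuous) and that $\flim$ respects the linear operation $R_n = \Id - S_n$. Both are routine. One could also phrase the whole corollary without reference to the norms by simply noting $\flim_n R_n = 0$ pointwise, but the point of stating it is to record the acceptability of $(\|R_n\|)$, which by \eqref{ineqSnRn} is equivalent, up to the harmless factor $2$ and additive constant $1$, to controlling $(\|S_n\|)$ — the quantity of real interest in Section \ref{sec_4}.
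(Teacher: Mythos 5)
Your proposal is correct and coincides with the paper's intended argument: the paper also obtains the corollary by noting that the $\F$-basis condition gives $\flim R_n(x)=0$ pointwise and then applying the implication (2)$\Longrightarrow$(1) of Theorem \ref{f-op-thm1} with $T_n=R_n$ and $a_n=\|R_n\|$, which by Definition \ref{def-w*-F-accept} is exactly $(w^*,X^*,\F)$-acceptability of $(\|R_n\|)$. Your extra bookkeeping (continuity of the $S_n$, hence $R_n$, and linearity of $\F$-limits) is routine and consistent with the paper's standing assumptions.
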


The next theorem lists applications of the above result together with results from Section \ref{sec_3}.

\begin{theorem}  \label{thm-F-bas-nec}
Let $\F$ be a free filter on $\N$. Then
\begin{enumerate}
\item[(i)] for every infinite-dimensional Banach space $X$ and every $\F$-basis $(u_n)$ of $X$ with continuous coordinate functional the sequence $\left(\|S_n\|\right)$ of norms of the corresponding partial sums operators is $(\F, 1)$-admissible.

\item[(ii)] If $X=\ell_2$ then for every $\F$-basis $(u_n)$ of $X$ with continuous coordinate functional the sequence $\left(\|S_n\|\right)$ is $(\F, 2)$-admissible.

\item[(iii)]If $X=\ell_p$ with $1 < p < 2$ then for every $\F$-basis $(u_n)$ of $X$ with continuous coordinate functional the sequence $\left(\|S_n\|\right)$ is $(\F, s)$-admissible for all $s \in [1, p)$.
\end{enumerate}
\end{theorem}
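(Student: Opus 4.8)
The plan is to reduce all three parts to a single statement about the remainder operators $R_n = \Id - S_n$, and then to carry everything back to the partial sum operators $S_n$ by the elementary inequalities \eqref{ineqSnRn}. The starting point is Corollary \ref{cor-rem1}: since $(u_n)$ is an $\F$-basis with continuous coordinate functionals, $\flim R_n(x) = 0$ for every $x \in X$, and hence (by Theorem \ref{f-op-thm1} applied with $Y = X$ and $T_n = R_n$) the sequence $(\|R_n\|)_{n \in \N}$ is $(w^*, X^*, \F)$-acceptable. As $\|R_n\| \ge 1$, this is a genuine sequence of positive reals, so in each case it remains only to apply the relevant description of $\mathcal A(w^*, X^*, \F)$ from Section \ref{sec_3}, and then to note that $\|S_n\| \le 2\|R_n\|$ forces $\sum_{n \in I}\|S_n\|^{-s} \ge 2^{-s}\sum_{n \in I}\|R_n\|^{-s}$ for every $I \in \F^*$ and every $s > 0$, so that $(\F, s)$-admissibility passes from $(\|R_n\|)$ to $(\|S_n\|)$.

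For (i) I would invoke Theorem \ref{thmA(gener)}, which says exactly that any $(w^*, X^*, \F)$-acceptable sequence lies in $\ADM_1(\F)$; the inequality above with $s = 1$ then gives $(\|S_n\|) \in \ADM_1(\F)$. For (ii), with $X = \ell_2$, Theorem \ref{thmA(H)} identifies $\mathcal A(w^*, \ell_2^*, \F)$ with $\ADM_2(\F)$, so $(\|R_n\|) \in \ADM_2(\F)$, and the comparison with $s = 2$ yields $(\|S_n\|) \in \ADM_2(\F)$. For (iii), with $X = \ell_p$ and $1 < p < 2$, the second item of Theorem \ref{theorE*=ell_pforF} applied to $(\|R_n\|)$ gives $(\|R_n\|) \in \ADM_s(\F)$ for every $s \in (1, p)$; the endpoint $s = 1$ is not covered there but is precisely part (i). Applying the comparison for each such $s$ gives $(\|S_n\|) \in \ADM_s(\F)$ for all $s \in [1, p)$.

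There is no real obstacle in this argument: all the substantial work has already been done in Corollary \ref{cor-rem1} and in the acceptability results of Section \ref{sec_3}. The two points that simply need to be stated cleanly are that the passage from $R_n$ to $S_n$ costs only the harmless factor $2^{-s}$, so no admissibility is lost, and that in part (iii) the value $s = 1$ must be extracted from part (i) rather than from Theorem \ref{theorE*=ell_pforF}, whose conclusion is restricted to $s > 1$.
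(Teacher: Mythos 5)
Your proposal is correct and follows essentially the same route as the paper: both reduce to the remainders via Corollary \ref{cor-rem1}, apply Theorems \ref{thmA(gener)}, \ref{thmA(H)} and \ref{theorE*=ell_pforF} to $(\|R_n\|)$, and transfer the conclusion to $(\|S_n\|)$ via the inequalities \eqref{ineqSnRn}. Your two clarifications---that the factor $2^{-s}$ in the comparison of the series is harmless, and that the endpoint $s=1$ in part (iii) must come from part (i) rather than from Theorem \ref{theorE*=ell_pforF}---are correct and merely make explicit what the paper's shorter proof leaves implicit.
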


\begin{proof}
According to Corollary \ref{cor-rem1},  in all the statements (i), (ii), (iii) the norms of reminders $\|R_n\|$, $n \in \N$, form an $(w^*, X^*, \mathfrak F)$-acceptable sequence. So, applying Theorems \ref{thmA(gener)}, \ref{thmA(H)} and \ref{theorE*=ell_pforF}, we see that all the statements (i), (ii), (iii) would be correct with  $\left(\|R_n\|\right)$ instead of $\left(\|S_n\|\right)$. In order to get what we need it is sufficient to apply the inequalities \eqref{ineqSnRn}.
\end{proof}

The previous theorem gives necessary conditions on a sequence $(a_n)$ of positive numbers to be norms of partial sums operators with respect to an $\F$-basis. In order to get sufficient conditions we are going to use the following construction which generalizes the construction from \cite[Theorem 1]{Kad-bib-ip}.

\begin{example} \label{example1}
Let $U=(u_k)_{k \in \N}$ be an $\F$-basis of a Banach space $X$, $(u^*_k)_{k \in \N}$ be its coordinate functionals, and $S_n$ be the corresponding partial sum operators. For a given sequence of positive scalars $B=(b_n)_{n \in \N}$ we define a new system of vectors $V(U, B) = (v_k)_{k \in \N}$ as follows:
$$
v_n=\sum\limits_{i=1}^n b_i u_i.
$$
Denote 
$$
v_n^*=\frac{1}{b_n}u^*_n-\frac{1}{b_{n+1}}u^*_{n+1}.
$$
Note that $\{v_n,v_n^*\}_{n=1}^{\infty}$ is a biorthogonal system. Indeed, 
\begin{align*}
    v_m^*(v_n)&=\left(\frac{1}{b_m}u^*_m-\frac{1}{b_{m+1}}u^*_{m+1}\right)\left( \sum_{i=1}^n b_i u_i\right)\\ 
    &=\sum_{i=1}^n\frac{b_i}{b_m}u_m^*(u_i)-\sum_{i=1}^n\frac{b_i}{b_{m+1}}u_{m+1}^*(u_i)=\delta_{m,n}.
\end{align*}
Denote 
$$
\tilde S_n(x)=\sum\limits_{k=1}^n v_k^*(x)v_k
$$
the partial sum projections associated with this new biorthogonal system.
For each $x \in X$,   
\begin{align} \label{eq-ex-newsums}
\tilde S_n(x)&=\sum\limits_{k=1}^n v_k^*(x)v_k =\sum\limits_{i=1}^n u_i^*(x) u_i - \frac{u_{n+1}^*(x)}{b_{n+1}}\sum\limits_{i=1}^n b_iu_i \nonumber \\ &= S_n(x) - \frac{u_{n+1}^*(x)}{b_{n+1}}\sum\limits_{i=1}^n b_iu_i.
\end{align}
Since $\flim S_n(x) = x$ for all $x \in X$, the system $V(U, B)$ forms an $\F$-basis if and only if for all $x \in X$
$$
\left\|\frac{u_{n+1}^*(x)}{b_{n+1}}\sum\limits_{i=1}^n b_iu_i\right\| = \frac{1}{b_{n+1}} \left(\left\|\sum\limits_{i=1}^n b_i u_i\right\| u_{n+1}^*\right)(x)  \to_\mathfrak{F} 0.
$$
In other words, the system $V(U, B)$ forms an $\F$-basis if and only if the sequence of functionals
$$
\frac{1}{b_{n+1}} \left\|\sum\limits_{i=1}^n b_i u_i\right\| u_{n+1}^* \in X^*
$$
is $\F$-convergent to zero in the $w^*$-topology.
\end{example}

\begin{theorem}  \label{thm-F-bas-1}
Let $\F$ be a free filter on $\N$ and $ (a_n)_{n \in \N} \subset (1, \infty)$ be a sequence of reals. Then the following conditions are equivalent:
\begin{enumerate}
\item[(i)] There is an $\F$-basis  of $\ell_1$ such that the norms of the partial sums operators with respect of that $\F$-basis are equal to the corresponding $a_n$.

\item[(ii)] There is an infinite-dimensional Banach space $X$ and an $\F$-basis of $X$ such that the norms of the partial sums operators with respect of that $\F$-basis are equal to the corresponding $a_n$.

\item[(iii)] The sequence $ (a_n)_{n \in \N}$ is $(\F, 1)$-admissible.
\end{enumerate}
\end{theorem}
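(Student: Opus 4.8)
The plan is to prove the cycle (i)$\Rightarrow$(ii)$\Rightarrow$(iii)$\Rightarrow$(i). The implication (i)$\Rightarrow$(ii) is immediate, taking $X=\ell_1$. For (ii)$\Rightarrow$(iii), note that if an $\F$-basis $(u_n)$ of $X$ has partial sums operators $S_n$ of finite norm $a_n$, then from $u_n^*(x)u_n=S_n(x)-S_{n-1}(x)$ (with $S_0=0$) and a Hahn--Banach functional $\phi$ with $\phi(u_n)=1$ one gets $u_n^*(x)=\phi\bigl(S_n(x)-S_{n-1}(x)\bigr)$, so each coordinate functional is continuous; hence Theorem~\ref{thm-F-bas-nec}(i) applies and $(\|S_n\|)=(a_n)$ is $(\F,1)$-admissible. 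Thus the whole content of the theorem is the implication (iii)$\Rightarrow$(i), which I would prove by an explicit construction.

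For (iii)$\Rightarrow$(i) I would start from the canonical basis $U=(e_n)$ of $\ell_1$ --- a Schauder basis (hence an $\F$-basis) with $\|S_n\|\equiv 1$ and coordinate functionals the canonical unit vectors $u_n^*=e_n^*\in\ell_\infty=\ell_1^*$ --- and feed it into the construction of Example~\ref{example1} with a carefully chosen auxiliary sequence $B=(b_n)$. Concretely, set $b_1=1$ and $b_{n+1}=(b_1+\dots+b_n)/a_n$; since $a_n>1$ these are well-defined positive reals (in closed form $b_1+\dots+b_n=\prod_{k=1}^{n-1}(1+a_k^{-1})$). For $x=(x_m)\in\ell_1$ formula \eqref{eq-ex-newsums} specializes to $\tilde S_n(x)=\sum_{i=1}^n\bigl(x_i-b_ix_{n+1}/b_{n+1}\bigr)e_i$, whence $\|\tilde S_n(x)\|_1\le\sum_{i=1}^n|x_i|+(|x_{n+1}|/b_{n+1})\sum_{i=1}^n b_i$; maximizing the right-hand side over the unit ball of $\ell_1$ --- splitting the unit of mass between the first $n$ coordinates, of weight $1$, and the $(n+1)$-st coordinate, of weight $(b_1+\dots+b_n)/b_{n+1}=a_n$ --- gives $\|\tilde S_n\|\le\max\{1,a_n\}=a_n$, and the value $a_n$ is attained at $x=e_{n+1}$, where $\tilde S_n(e_{n+1})=-b_{n+1}^{-1}\sum_{i=1}^n b_ie_i$. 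So the new system $V(U,B)$, once it is shown to be an $\F$-basis, has partial sums operators of norm exactly $a_n$, and its coordinate functionals $v_n^*=b_n^{-1}e_n^*-b_{n+1}^{-1}e_{n+1}^*$ are manifestly continuous. I expect the \emph{main obstacle} to be precisely this design step: engineering $B$ so that the partial-sum norms equal the prescribed values, which works because in $\ell_1$ the quantity $\|\tilde S_n\|$ has the clean closed form $\max\{1,(b_1+\dots+b_n)/b_{n+1}\}$ --- this is why $\ell_1$, rather than an arbitrary space, is the natural arena for the sufficiency direction.

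It then remains to verify that $V(U,B)$ really is an $\F$-basis, which by the criterion at the end of Example~\ref{example1} amounts to showing that the functionals $g_n:=b_{n+1}^{-1}\bigl\|\sum_{i=1}^n b_ie_i\bigr\|_1\,e_{n+1}^*=a_ne_{n+1}^*\in\ell_\infty$ are $\F$-convergent to $0$ in $\sigma(\ell_\infty,\ell_1)$; this is exactly where the hypothesis that $(a_n)$ is $(\F,1)$-admissible enters. By Theorem~\ref{stationary-thm} it suffices to check that $0$ is a $\sigma(\ell_\infty,\ell_1)$-cluster point of $(g_n)_{n\in I}$ for each $I\in\F^*$, and this follows from Lemma~\ref{lem1} applied to $(e_{n+1}^*)_{n\in I}\subset\ell_1^*$ --- which satisfies condition $(A)$ with $C=1$ since $\sum_{n\in I}|e_{n+1}^*(x)|=\sum_{n\in I}|x_{n+1}|\le\|x\|_1$ --- together with the admissibility assumption $\sum_{n\in I}a_n^{-1}=\infty$. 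A small point requiring care is the shift of index ($u_{n+1}^*$, not $u_n^*$, appears in Example~\ref{example1}): since $n\mapsto n+1$ need not preserve $\F$, one cannot quote Theorem~\ref{thmA(ellinf)} verbatim, but the argument just given (Lemma~\ref{lem1} plus Theorem~\ref{stationary-thm}) goes through unchanged for the shifted canonical vectors. Combining this with the norm computation of the previous paragraph shows that $V(U,B)$ is an $\F$-basis of $\ell_1$ whose partial sums operators have norms $a_n$, establishing (i).
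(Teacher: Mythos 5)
Your proof is correct and follows the paper's route: the hard implication (iii)$\Rightarrow$(i) is obtained exactly as in the paper by feeding the canonical basis of $\ell_1$ into the construction of Example~\ref{example1} and verifying the $\F$-basis property via $(\F,1)$-admissibility (Lemma~\ref{lem1} together with Theorem~\ref{stationary-thm}). The one genuine difference is the design of the auxiliary sequence $(b_n)$: you exploit the exact formula $\|\tilde S_n\|=\max\{1,(b_1+\dots+b_n)/b_{n+1}\}$ valid in $\ell_1$ and take the explicit choice $b_{n+1}=(b_1+\dots+b_n)/a_n$, so that the quantity $c_n=b_{n+1}^{-1}\bigl\|\sum_{i=1}^n b_ie_i\bigr\|$ equals $a_n$ exactly; the paper instead defines $b_{n+1}$ implicitly by an intermediate-value argument on $t\mapsto\bigl\|S_n-\tfrac{e_{n+1}^*}{t}\otimes\sum_{i=1}^n b_ie_i\bigr\|$ and then only controls $|c_n-a_n|\le\|S_n\|=1$, which suffices since $\flim a_ne_n^*=0$ forces $\flim c_ne_n^*=0$. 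Your version buys a cleaner, fully explicit construction in $\ell_1$, while the paper's implicit choice is the one that transfers verbatim to $\ell_2$ and $\ell_p$ in Theorems~\ref{thm-F-bas-2} and~\ref{thm-F-bas-3}, where no closed norm formula is available. You also handle two small points more carefully than the printed proof: the continuity of the coordinate functionals in (ii)$\Rightarrow$(iii) (needed to invoke Theorem~\ref{thm-F-bas-nec}(i)), and the index shift $e_{n+1}^*$ versus $e_n^*$ coming from Example~\ref{example1}, for which you rightly avoid quoting Theorem~\ref{thmA(ellinf)} verbatim and instead rerun Lemma~\ref{lem1} on the shifted vectors; both fixes are correct and welcome.
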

\begin{proof}
The implication (i)$\Longrightarrow$(ii) is evident and (ii)$\Longrightarrow$(iii) follows from (i) of Theorem \ref{thm-F-bas-nec}. Let us demonstrate  that (iii) implies (i). Take $X=\ell_1$ and let $ (e_n)_{n \in \N}$ be the canonical basis of $\ell_1$. Then the corresponding coordinate functionals $(e_n^*)_{n \in \N}$ form the canonical basis of $c_0$. Denote $S_n$ be the partial sum operators with respect to the basis $ (e_n)_{n \in \N}$.  We are going to apply the construction from Example \ref{example1} with $(u_k)_{k \in \N} = (e_n)_{n \in \N}$. For this,  we define the corresponding positive scalars $b_n$ recurrently. Put $b_1=1$ and, when $b_1, \ldots , b_n$  are already defined, we select $b_{n+1}$ in such a way that
\begin{equation} \label{eq-ex-newsums+}
\left\| S_n - \frac{e_{n+1}^*}{b_{n+1}}\otimes \sum\limits_{i=1}^n b_ie_i\right\| = a_n.
\end{equation}
(here, for a functional $f \in X^*$ and for an element $z \in X$ we use the notation $f\otimes z$ for the operator that acts from $X$ to $X$ by the rule $(f\otimes z)(x) = f(x) z$).
Such a selection is possible because the function 
$$
f(t):= \left\|  S_n - \frac{e_{n+1}^*}{t}\otimes \sum\limits_{i=1}^n b_ie_i\right\|
$$
is continuous on $(0, +\infty)$, $\lim_{t \to +0}f(t) = +\infty$ and $\lim_{t \to +\infty}f(t) = 1$.
Then 
$$
v_n=\sum\limits_{i=1}^n b_i e_i, v_n^*=\frac{1}{b_n}e^*_n-\frac{1}{b_{n+1}}e^*_{n+1}.
$$
from Example \ref{example1} form a biorthogonal system with the corresponding partial sums operators being
$$
\tilde S_n(x)= S_n(x) - \frac{e_{n+1}^*(x)}{b_{n+1}}\sum\limits_{i=1}^n b_ie_i,
$$
and our choice of $(b_n)$ ensures that $ \left\|\tilde S_n\right\| = a_n$ as required. So, it remains to check that $(v_n)_{n \in \N}$ form an $\F$-basis of $X$. 
Denote 
$$
c_n=\frac{1}{b_{n+1}} \left\|\sum\limits_{i=1}^n b_i e_i\right\|. 
$$
As remarked at the end of Example \ref{example1}, $(v_n)_{n \in \N}$ form an $\F$-basis of $X$ if and only if the sequence $(c_n e_n^*)$ is $\F$-convergent to zero in the $w^*$-topology of $\ell_\infty = (\ell_1)^*$. 

 The recurrent condition \eqref{eq-ex-newsums+} and the triangle inequality imply that
\begin{equation} \label{ineqcnan}
|c_n - a_n|=\left|\left\|\frac{e_{n+1}^*}{b_{n+1}}\otimes \sum\limits_{i=1}^n b_ie_i\right\| - \left\| S_n - \frac{e_{n+1}^*}{b_{n+1}}\otimes \sum\limits_{i=1}^n b_ie_i\right\|\right| \le \|S_n\| = 1.
\end{equation}
The condition (ii) of our Theorem says that the sequence $ (a_n)_{n \in \N}$ is $(\F, 1)$-admissible, so by Theorem \ref{thmA(ellinf)} $\flim a_n e_n^* = 0$ in the $w^*$-topology. Together with \eqref{ineqcnan} this means that $\flim c_n e_n^* = 0$ in the $w^*$-topology as well.
\end{proof}

There is a similar characterization in the case of Hilbert spaces.

\begin{theorem}  \label{thm-F-bas-2}
Let $\F$ be a free filter on $\N$ and $ (a_n)_{n \in \N} \subset (1, \infty)$ be a sequence of reals. Then the following conditions are equivalent:
\begin{enumerate}
\item[(i)] There is an $\F$-basis of an infinite-dimensional Hilbert space $H$ such that the norms of the partial sums operators with respect of that $\F$-basis are equal to the corresponding $a_n$.

\item[(ii)] The sequence $ (a_n)_{n \in \N}$ is $(\F, 2)$-admissible.
\end{enumerate}
\end{theorem}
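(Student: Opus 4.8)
The plan is to follow the proof of Theorem \ref{thm-F-bas-1} almost verbatim, with $\ell_1$ replaced by $\ell_2$ and the description of $(w^*,(\ell_1)^*,\F)$-acceptability (Theorem \ref{thmA(ellinf)}) replaced by its Hilbert-space counterpart, Theorem \ref{thmA(H)}. The implication (i)$\Longrightarrow$(ii) is immediate: an $\F$-basis is a countable family whose closed linear span is the whole space (since $\flim S_n(x)=x$ and each $S_n(x)$ lies in that span), so $H$ is separable, hence isometrically isomorphic to $\ell_2$, and part (ii) of Theorem \ref{thm-F-bas-nec} applies directly to give that $(\|S_n\|)$ is $(\F,2)$-admissible.

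For (ii)$\Longrightarrow$(i) I would take $H=\ell_2$ with orthonormal basis $(e_n)$, whose partial sum operators $S_n$ are orthogonal projections, so $\|S_n\|=1$. Then I run the construction of Example \ref{example1} with $(u_k)=(e_k)$, setting $b_1=1$ and, recursively, choosing $b_{n+1}>0$ so that $\bigl\|\tilde S_n\bigr\|=\bigl\|S_n-\tfrac{1}{b_{n+1}}\,e_{n+1}^*\otimes\sum_{i=1}^n b_ie_i\bigr\|=a_n$. Such a choice exists because $t\mapsto\bigl\|S_n-\tfrac1t\,e_{n+1}^*\otimes\sum_{i=1}^n b_ie_i\bigr\|$ is continuous on $(0,\infty)$, tends to $+\infty$ as $t\to0^+$ and to $\|S_n\|=1$ as $t\to\infty$, while $a_n>1$; in fact a one-line orthogonality computation yields the closed form $\|\tilde S_n\|^2=1+b_{n+1}^{-2}\sum_{i=1}^n b_i^2$, so that $b_{n+1}=\bigl(\sum_{i=1}^n b_i^2/(a_n^2-1)\bigr)^{1/2}$. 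The resulting system $v_n=\sum_{i=1}^n b_ie_i$ has the (continuous) coordinate functionals $v_n^*=b_n^{-1}e_n^*-b_{n+1}^{-1}e_{n+1}^*$ and partial sum operators $\tilde S_n$ with $\|\tilde S_n\|=a_n$, as required; it then remains to verify that $(v_n)$ is genuinely an $\F$-basis of $\ell_2$.

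By the concluding remark of Example \ref{example1}, $(v_n)$ is an $\F$-basis if and only if the functionals $g_n:=c_n\,e_{n+1}^*$, where $c_n:=b_{n+1}^{-1}\|\sum_{i=1}^n b_ie_i\|$, $\F$-converge to $0$ in $\sigma(\ell_2,\ell_2)$ (identifying $(\ell_2)^*$ with $\ell_2$). The triangle inequality applied to the defining relation for $b_{n+1}$ gives $|c_n-a_n|\le\|S_n\|=1$, hence $c_n<2a_n$ (using $a_n>1$) and therefore $\sum_{n\in I}c_n^{-2}\ge\tfrac14\sum_{n\in I}a_n^{-2}=\infty$ for every $I\in\F^*$ by the assumed $(\F,2)$-admissibility of $(a_n)$; thus $(c_n)\in\ADM_2(\F)$. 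Finally, for each $I\in\F^*$ the vectors $(e_{n+1})_{n\in I}$ form an orthonormal basis of their (infinite-dimensional, since $I$ is infinite) closed span, and a sequence lying in a closed subspace of $\ell_2$ has $0$ as a weak cluster point in $\ell_2$ iff it does so in that subspace; so Theorem \ref{stationary-thm} combined with Theorem \ref{theo-Kad-cyl1} shows that $(c_n e_{n+1})$ is weakly $\F$-null exactly when $\sum_{n\in I}c_n^{-2}=\infty$ for all $I\in\F^*$, which has just been verified. Hence $(v_n)$ is the desired $\F$-basis.

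I expect the only step needing genuine care to be this last one, namely matching weak $\F$-convergence of the shifted, scaled orthonormal sequence $(c_n e_{n+1})$ with $(\F,2)$-admissibility of $(c_n)$. It is, however, nothing more than Theorem \ref{stationary-thm} together with the Hilbert-space cluster-point criterion of Theorem \ref{theo-Kad-cyl1}; the index shift $n\mapsto n+1$ is harmless, because membership of $n$ in a stationary set $I$ controls only which coefficients $c_n$ occur in the relevant series, not which (pairwise orthogonal, unit) basis vectors they are attached to.
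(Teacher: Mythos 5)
Your proposal is correct and follows essentially the same route as the paper: the paper proves (ii)$\Longrightarrow$(i) by repeating the $\ell_1$ construction of Theorem \ref{thm-F-bas-1} with $H=\ell_2$ and invoking Theorem \ref{thmA(H)} in place of Theorem \ref{thmA(ellinf)}, exactly as you do, and (i)$\Longrightarrow$(ii) via Theorem \ref{thm-F-bas-nec}(ii). Your extra touches (separability of $H$, the closed form for $b_{n+1}$, and the careful treatment of the index shift $e_{n+1}$ versus $e_n$) are sound refinements of details the paper leaves implicit.
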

\begin{proof}
The implication (i)$\Longrightarrow$(ii) follows from (ii) of Theorem \ref{thm-F-bas-nec}. The inverse implication (ii)$\Longrightarrow$(i) is analogous to the corresponding part of the previous theorem. Take $H=\ell_2$ and  apply the construction from Example \ref{example1} with $(u_k)_{k \in \N} = (e_n)_{n \in \N}$ where $ (e_n)_{n \in \N}$ is the canonical basis of $\ell_2$ and the same recurrent construction of  $b_n$ by the rule from \eqref{eq-ex-newsums+} with  $b_1=1$.
All the proof goes the same way as before, just at the very end, instead of Theorem \ref{thmA(ellinf)} one needs to apply Theorem \ref{thmA(H)}.
\end{proof}

In the same vein, one gets the following partial result about $\F$-bases in $\ell_p$.

\begin{theorem}  \label{thm-F-bas-3}
Let $\F$ be a free filter on $\N$, $1 < p < 2$, and $ (a_n)_{n \in \N} \subset (1, \infty)$ be an $(\F, p)$-admissible sequence of reals. Then there is an $\F$-basis of $\ell_p$ such that the norms of the partial sums operators with respect of that $\F$-basis are equal to the corresponding $a_n$.
\end{theorem}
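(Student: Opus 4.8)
The plan is to run the argument of Theorems~\ref{thm-F-bas-1} and~\ref{thm-F-bas-2} almost verbatim, with $\ell_p$ playing the role of $\ell_1$ or $\ell_2$ and Theorem~\ref{theorE*=ell_p} taking over the role played there by Theorem~\ref{thmA(ellinf)} (resp.\ Theorem~\ref{thmA(H)}). First I would take $X=\ell_p$ with its canonical basis $(e_n)_{n\in\N}$, whose coordinate functionals $(e_n^*)_{n\in\N}$ form the canonical basis of $\ell_{p'}=(\ell_p)^*$, where $\tfrac1p+\tfrac1{p'}=1$. Writing $S_n$ for the partial sums operators of $(e_n)_{n\in\N}$, I would apply the construction of Example~\ref{example1} with $(u_k)_{k\in\N}=(e_n)_{n\in\N}$, choosing the scalars $b_n>0$ recurrently: put $b_1=1$ and, once $b_1,\dots,b_n$ are fixed, select $b_{n+1}$ so that \eqref{eq-ex-newsums+} holds for the given $a_n$. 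Exactly as in Theorem~\ref{thm-F-bas-1}, such a choice exists because $t\mapsto\bigl\|S_n-t^{-1}e_{n+1}^*\otimes\sum_{i=1}^n b_i e_i\bigr\|$ is continuous on $(0,+\infty)$, blows up as $t\to0^+$, tends to $\|S_n\|=1$ as $t\to+\infty$, while $a_n>1$.

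With $b_n$ so chosen, the vectors $v_n=\sum_{i=1}^n b_i e_i$ and the functionals $v_n^*=b_n^{-1}e_n^*-b_{n+1}^{-1}e_{n+1}^*$ from Example~\ref{example1} form a biorthogonal system whose partial sums operators $\tilde S_n$ satisfy $\|\tilde S_n\|=a_n$ by construction, so it remains only to verify that $(v_n)_{n\in\N}$ is an $\F$-basis of $\ell_p$. By the criterion stated at the end of Example~\ref{example1}, this is equivalent to $\flim c_n e_n^*=0$ in the $w^*$-topology $\sigma(\ell_{p'},\ell_p)$, where $c_n=b_{n+1}^{-1}\bigl\|\sum_{i=1}^n b_i e_i\bigr\|$; and the triangle inequality applied to \eqref{eq-ex-newsums+} gives, just as in \eqref{ineqcnan}, that $|c_n-a_n|\le\|S_n\|=1$.

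To obtain $\flim c_n e_n^*=0$ I would first deduce $\flim a_n e_n^*=0$ in $\sigma(\ell_{p'},\ell_p)$ from the $(\F,p)$-admissibility of $(a_n)_{n\in\N}$. Fix $I\in\F^*$. Since each $e_n^*$ with $n\in I$ is supported on the single coordinate $n$, testing the functionals $a_n e_n^*$, $n\in I$, against vectors of $\ell_p$ only involves the coordinates indexed by $I$; hence $0$ is a $\sigma(\ell_{p'},\ell_p)$-cluster point of $(a_n e_n^*)_{n\in I}$ if and only if it is a cluster point of the same sequence in $\sigma(\ell_{p'}(I),\ell_p(I))$, where $\ell_{p'}(I)$ and $\ell_p(I)$ denote the corresponding sequence spaces over $I$. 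By Theorem~\ref{theorE*=ell_p}, applied after reindexing $I$, the latter holds because $\sum_{n\in I}a_n^{-p}=\infty$. Theorem~\ref{stationary-thm} then yields $\flim a_n e_n^*=0$ in $\sigma(\ell_{p'},\ell_p)$. Finally, for every $x=(x_n)_n\in\ell_p$ one has $x_n\to0$, so $|(c_n-a_n)x_n|\le|x_n|\to0$, and since $\F$ is free, $\flim(c_n-a_n)x_n=0$; by linearity of the $\F$-limit, $\flim c_n x_n=\flim a_n x_n+\flim(c_n-a_n)x_n=0$, that is $\flim c_n e_n^*=0$ in the $w^*$-topology, which finishes the argument.

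I do not expect any serious obstacle: the proof is a routine transcription of the $\ell_1$ and $\ell_2$ cases above. The only genuine difference for $1<p<2$ is that no full description of $(w^*,\ell_p^*)$-acceptability is known (cf.\ the remarks following Theorem~\ref{theorE*=ell_p++}), which is precisely why the result is one-directional and cannot be promoted to an equivalence as in Theorems~\ref{thm-F-bas-1} and~\ref{thm-F-bas-2}. The one spot requiring a little care is the passage from the ``canonical basis'' formulation of Theorem~\ref{theorE*=ell_p} to its version relative to a trace $I\in\F^*$, and this is harmless because the functionals involved are supported on pairwise disjoint single coordinates lying in $I$.
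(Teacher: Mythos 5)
Your proposal is correct and is essentially the proof the paper intends: Theorem~\ref{thm-F-bas-3} is stated ``in the same vein'' as Theorems~\ref{thm-F-bas-1} and~\ref{thm-F-bas-2}, i.e.\ the Example~\ref{example1} construction in $\ell_p$ with $b_n$ chosen via \eqref{eq-ex-newsums+}, concluding with the first part of Theorem~\ref{theorE*=ell_pforF} (equivalently, Theorem~\ref{theorE*=ell_p} upgraded through Theorem~\ref{stationary-thm}) in place of Theorem~\ref{thmA(ellinf)}. Your explicit handling of the trace $I\in\F^*$ (reindexing to $\ell_{p'}(I)$, using that the $e_n^*$ are disjointly supported) and of the passage from $\flim a_ne_n^*=0$ to $\flim c_ne_n^*=0$ via $|c_n-a_n|\le 1$ just spells out details the paper leaves implicit.
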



\section{The role of summable filters} \label{sec_5}

One can ``measure'' filters comparing them with some standard explicitly defined filters. In this section we use as ``scale of measurement'' the summable filters that are defined below.

\begin{definition} \label{defsum-fil}
For a sequence $s=(s_k)$ of non-negative real numbers such that $\sum_{k=1}^{\infty} s_k=\infty$ the corresponding \textit{summable filter} $\F^{s}$ is the collection of those subsets $A\subset{\mathbb{N}}$ that $\sum_{k\in \N \setminus A} s_k < \infty$. 
\end{definition}

Remark, that for the filter  $\F = \F^{s}$ the corresponding ideal $\mathcal{I}^{s}:=\mathcal{I}_{\F^{s}}$ consists of complements to elements of $\F^{s}$, that is of those sets $B \subset \N$ that $\sum_{k\in B} s_k < \infty$. Consequently, $D \in \left(\F^{s}\right)^*$ if and only if $\sum_{k\in D} s_k = \infty$.

\begin{proposition}[{Adaptation of \cite[Corollary 5.16]{kaleor}}] \label{prop-sum-acc}
Let $s=(s_k) \subset [0, 1]$ be a sequence reals such that  $\sum_{k=1}^{\infty} s_k=\infty$, and let  $a=(a_k)$ be a sequence of positive numbers. Then the following conditions are equivalent:
\begin{enumerate}
\item[(i)] the sequence $a$ is $(\F^{s}, 1)$-admissible.

\item[(ii)] $(a_n s_n)$ is $\F^{s}$-bounded, i.e., there is an $A \in \F^{s}$ such that
$\sup_{n \in A}a_n s_n = M < \infty$.
\end{enumerate}
\end{proposition}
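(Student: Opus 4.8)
The plan is to establish the two implications separately; the direction (ii)$\Longrightarrow$(i) is a one-line estimate, while (i)$\Longrightarrow$(ii) needs a short greedy extraction. Throughout I will use the description recorded just after Definition~\ref{defsum-fil}: $D \in \left(\F^{s}\right)^*$ iff $\sum_{n \in D} s_n = \infty$, and $A \in \F^{s}$ iff $\sum_{n \notin A} s_n < \infty$; recall also that $(\F^{s},1)$-admissibility of $a$ means $\sum_{n \in I} a_n^{-1} = \infty$ for every $I \in \left(\F^{s}\right)^*$.

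For (ii)$\Longrightarrow$(i): assume $A \in \F^{s}$ satisfies $a_n s_n \le M$ on $A$; note $M > 0$, since otherwise $s_n = 0$ on $A$, whence $\sum_n s_n = \sum_{n \notin A} s_n < \infty$, contradicting the hypothesis. Thus $a_n^{-1} \ge s_n / M$ for $n \in A$. Given any $I \in \left(\F^{s}\right)^*$, the set $I \setminus A$ has finite $s$-sum (it is contained in $\N \setminus A$), so $\sum_{n \in I \cap A} s_n = \infty$; consequently $\sum_{n \in I} a_n^{-1} \ge \frac{1}{M}\sum_{n \in I \cap A} s_n = \infty$, which is exactly $(\F^{s},1)$-admissibility.

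For (i)$\Longrightarrow$(ii) I would argue by contraposition. Assume (ii) fails and, for $k \in \N$, set $E_k = \{\, n \in \N : a_n s_n > k \,\}$. The first step is to observe that the failure of (ii) forces $\sum_{n \in E_k} s_n = \infty$ for every $k$: if $\sum_{n \in E_k} s_n < \infty$ for some $k$, then $A := \N \setminus E_k$ belongs to $\F^{s}$ and $\sup_{n \in A} a_n s_n \le k$, contradicting $\neg$(ii). The second step is the extraction: inductively choose pairwise disjoint \emph{finite} sets $F_k \subset E_{2^k}$ with $1 \le \sum_{n \in F_k} s_n < 2$. This is possible because, after removing the finitely many indices already used in $F_1 \cup \dots \cup F_{k-1}$, the sum over $E_{2^k}$ is still infinite while each $s_n \le 1$, so accumulating terms one by one yields a partial sum that first enters $[1,2)$. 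Put $I := \bigcup_k F_k$. Then $\sum_{n \in I} s_n \ge \sum_k 1 = \infty$, so $I \in \left(\F^{s}\right)^*$; on the other hand, for $n \in F_k \subset E_{2^k}$ one has $a_n^{-1} < 2^{-k} s_n$, hence $\sum_{n \in I} a_n^{-1} < \sum_k 2^{-k}\sum_{n \in F_k} s_n < \sum_k 2^{1-k} = 2 < \infty$. Thus $a \notin \ADM_1(\F^{s})$.

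The only genuine obstacle is the choice of thresholds in the second implication: taking the sets $E_{2^k}$ (geometric growth) is precisely what makes $\sum_k 2^{-k}\sum_{n\in F_k}s_n$ converge, whereas the naive choice $E_k$ would produce the divergent harmonic series. Everything else — the reduction to the contrapositive, the elementary remark that deleting finitely many non-negative terms preserves divergence of a series, and the block-by-block extraction — is routine bookkeeping. This is the $p=1$ analogue of the reasoning behind \cite[Corollary 5.16]{kaleor}, with $a_n^{-1}$ in the role of $a_n^{-2}$.
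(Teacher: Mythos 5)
Your proposal is correct and follows essentially the same route as the paper: the same one-line estimate $\sum_{n\in I}a_n^{-1}\ge\frac1M\sum_{n\in I\cap A}s_n$ for (ii)$\Longrightarrow$(i), and for (i)$\Longrightarrow$(ii) the same contrapositive with level sets at the geometric thresholds $2^k$ and the extraction of disjoint finite blocks whose $s$-sums lie in $[1,2]$. The only difference is that you spell out two minor points the paper leaves implicit (that $M>0$, and that $s_k\le 1$ guarantees the block extraction), which is fine.
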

\begin{proof}
(ii)$\Longrightarrow$(i). Consider an arbitrary $I \in \left(\F^{s}\right)^*$ and let $A \in \F^{s}$ and $M$ be from condition (ii).  Then $\sum\limits_{n \in I} s_n = \infty$, $\sum\limits_{n \in \N \setminus A} s_n < \infty$, and consequently
$$
 \sum\limits_{n \in I} a_n^{-1} \ge  \sum\limits_{n \in I \cap A} a_n^{-1} \ge \frac1M  \sum\limits_{n \in I \cap A} s_n \ge \frac1M  \sum\limits_{n \in I} s_n - \frac1M  \sum\limits_{n \in \N \setminus A} s_n =  \infty.
$$
(i)$\Longrightarrow$(ii). Assume to the contrary that $\sup_{n \in A}a_n s_n = \infty$ for every $A \in \F^{s}$. Then, for every $M > 0$, the set $B_M = \{n \in \N: a_n s_n > M \}$ intersects all elements $A \in \F^{s}$. This means that $B_M \in (\F^{s})^*$, that is 
$$
\sum_{n \in B_M} s_n = \infty.
$$
Using the last condition recurrently for $M = 2, 4, 8, \ldots$ we can select disjoint finite subsets $D_1, D_2, \ldots$ such that $D_m \subset B_{2^m}$ and
$$
1 \le \sum_{n \in D_m} s_n \le 2.
$$
Consider the set $D = \bigsqcup_{m \in \N} D_m$. By our construction, 
$$
\sum_{n \in D} s_n = \sum_{m \in \N} \sum_{n \in D_m} s_n = \infty
$$
which means that $D \in (\F^{s})^*$.

On the other hand,
$$
\sum_{n \in D} a_n^{-1} = \sum_{m \in \N} \sum_{n \in D_m} a_n^{-1} \le  \sum_{m \in \N} \sum_{n \in D_m}2^{-m} s_n \le \sum_{m \in \N} 2^{-m} \cdot 2 < \infty,
$$
which contradicts the condition (i).
\end{proof}

\begin{proposition} \label{prop-sum-fil++}
Let $\F$ be a free filter on $\N$. Then a sequence  $(a_n)$ of positive numbers with $\sum_{n \in \N} a_n^{-1} = \infty$  is $(\F, 1)$-admissible if and only if $\F$ dominates the summable filter $\F^{s}$ for  $s=\left(a_n^{-1}\right)$. Consequently, if for $\F$ there is and $\F$-basis of some Banach space $X$ then $\F$ dominates the summable filter $\F^{s}$ for $s=\left(\|S_n\|^{-1}\right)$.
\end{proposition}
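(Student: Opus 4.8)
The plan is to prove both implications by directly unwinding the definitions of $(\F,1)$-admissibility (Definition \ref{def-p-admis}), of the summable filter $\F^{s}$ (Definition \ref{defsum-fil}), and of $\F$-stationarity. First I would record that the standing hypothesis $\sum_{n \in \N} a_n^{-1} = \infty$ is precisely what makes $\F^{s}$ with $s=(a_n^{-1})$ a free filter on $\N$, so that the statement ``$\F$ dominates $\F^{s}$'' is meaningful; recall also from the start of Section \ref{sec_5} that $\N \setminus B \in \F^{s}$ if and only if $\sum_{n \in B} a_n^{-1} < \infty$. The single combinatorial fact I will use in both directions is the dichotomy recalled in Section \ref{sec_3}: for a free filter $\F$ and any $A \subset \N$, one has $A \in \F$ if and only if $\N \setminus A \in \mathcal{I}_{\F}$, if and only if $\N \setminus A$ is \emph{not} $\F$-stationary.

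For the implication ``$\F$ dominates $\F^{s}$ $\Longrightarrow$ $(a_n)$ is $(\F,1)$-admissible'', I would take $I \in \F^*$ and assume, towards a contradiction, that $\sum_{n \in I} a_n^{-1} < \infty$. Then $\N \setminus I$ belongs to $\F^{s}$, hence to $\F$, so $I = \N \setminus (\N \setminus I) \in \mathcal{I}_{\F}$, contradicting that $I$ is $\F$-stationary. Thus $\sum_{n \in I} a_n^{-1} = \infty$ for every $I \in \F^*$.

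For the converse, assuming $(a_n)$ is $(\F,1)$-admissible, I would take $A \in \F^{s}$ and show $A \in \F$. If not, then by the dichotomy $\N \setminus A$ is $\F$-stationary, i.e.\ $\N \setminus A \in \F^*$, so admissibility gives $\sum_{n \in \N \setminus A} a_n^{-1} = \infty$; but $A \in \F^{s}$ says exactly $\sum_{n \in \N \setminus A} a_n^{-1} < \infty$, a contradiction. Hence $A \in \F$, i.e.\ $\F \supseteq \F^{s}$. Finally, for the ``Consequently'' clause: if $\F$ admits an $\F$-basis of some Banach space $X$ (with continuous coordinate functionals, as assumed throughout Section \ref{sec_4}), then part (i) of Theorem \ref{thm-F-bas-nec} shows $(\|S_n\|)$ is $(\F,1)$-admissible; applying this with the stationary set $I=\N$ yields $\sum_{n \in \N} \|S_n\|^{-1} = \infty$, so $\F^{s}$ with $s=(\|S_n\|^{-1})$ is a genuine free filter, and the equivalence just proved (with $a_n = \|S_n\| \ge 1$) shows $\F$ dominates it. I do not expect a real obstacle here: the whole argument is a definition chase, and the only points requiring a little care are checking that $\F^{s}$ is well defined under the stated hypothesis (and, in the corollary, that $\sum_n \|S_n\|^{-1}=\infty$ follows from admissibility) and invoking the stationarity dichotomy in the correct direction.
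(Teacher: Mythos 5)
Your proof is correct and follows essentially the same route as the paper's: both are a definition chase through the filter/ideal/stationary-set duality, with your version merely making explicit the step (left implicit in the paper) that $\F^* \subseteq \left(\F^{s}\right)^*$ is equivalent to $\F \supseteq \F^{s}$ via passing to complements, and spelling out the ``Consequently'' clause through Theorem \ref{thm-F-bas-nec}(i) with $I=\N$. No gaps.
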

\begin{proof}
Assume that  $(a_n)$ is $(\F, 1)$-admissible. Then for every $I \in \mathfrak{F}^*$
$$
 \sum\limits_{n \in I} a_n^{-1} = \infty.
$$
Consequently, every $I \in \mathfrak{F}^*$ belongs to $\left(\F^{s}\right)^*$, that is $\mathfrak{F}^* \subset \left(\F^{s}\right)^*$. This means that  $\F \succ \F^{s}$.

Conversely, assume that  $\F \succ \F^{s}$. Then every $I \in \mathfrak{F}^*$ belongs to $\left(\F^{s}\right)^*$ as well, that is $\sum\limits_{n \in I} a_n^{-1} = \infty$.
\end{proof}

\begin{theorem}  \label{thm-sum-fil1}
Let $(a_n)_{n \in \N} \subset (1, \infty)$.  Then the following conditions are equivalent:
\begin{enumerate}
\item[(i)] $\sum_{n \in \N} a_n^{-1} = \infty$.

\item[(ii)] There are a free filter $\F$ on $\N$, an infinite-dimensional Banach space $X$ and an $\F$-basis  $(u_k)_{k \in \N}$ of $X$ such that the norms of the partial sums operators with respect of $(u_k)_{k \in \N}$ are equal to the corresponding $a_n$.

\item[(iii)] For $s = \left(a_n^{-1}\right)_{n \in \N}$, there is an $\F^s$-basis of $\ell_1$ such that the norms of the partial sums operators with respect of $(u_k)_{k \in \N}$ are equal to the corresponding $a_n$.
\end{enumerate}
\end{theorem}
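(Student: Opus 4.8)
The plan is to close the cycle (iii) $\Longrightarrow$ (ii) $\Longrightarrow$ (i) $\Longrightarrow$ (iii). The first implication is immediate, since $\ell_1$ is an infinite-dimensional Banach space, so an $\F^s$-basis of $\ell_1$ witnesses (ii). For (ii) $\Longrightarrow$ (i), note that prescribing the norms $\|S_n\|$ forces the partial sums operators to be continuous, hence so are the coordinate functionals; thus part (i) of Theorem \ref{thm-F-bas-nec} applies and tells us that $(\|S_n\|)=(a_n)$ is $(\F, 1)$-admissible. Applying the defining condition of $(\F,1)$-admissibility to the set $I=\N$ — which lies in $\F^*$ because every member of $\F$ is nonempty, so meets $\N$ — yields $\sum_{n\in\N}a_n^{-1}=\infty$.

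The substance is in (i) $\Longrightarrow$ (iii). Set $s=(s_n)_{n\in\N}$ with $s_n=a_n^{-1}$. Since $(a_n)\subset(1,\infty)$ we have $s\subset(0,1)\subset[0,1]$, and by hypothesis (i) $\sum_{n\in\N}s_n=\infty$, so the summable filter $\F^s$ of Definition \ref{defsum-fil} is well-defined; it is free because each $s_n$ is finite, so every finite set $B$ has $\sum_{k\in B}s_k<\infty$ and hence belongs to $\mathcal I^s$. Next I would verify that $(a_n)$ is $(\F^s,1)$-admissible via Proposition \ref{prop-sum-acc}: its condition (ii) demands the $\F^s$-boundedness of $(a_n s_n)$, and the whole point of the choice $s_n=a_n^{-1}$ is that $a_n s_n=1$ for every $n$, so taking $A=\N\in\F^s$ gives $\sup_{n\in A}a_n s_n=1<\infty$. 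Therefore $(a_n)\in\ADM_1(\F^s)$.

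Finally, apply the implication (iii) $\Longrightarrow$ (i) of Theorem \ref{thm-F-bas-1} with the filter $\F^s$: since $(a_n)$ is $(\F^s,1)$-admissible, there is an $\F^s$-basis of $\ell_1$ whose partial sums operators have norms exactly $a_n$. This is precisely statement (iii) of the present theorem, and the cycle closes.

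I do not expect a serious obstacle: the theorem is essentially a repackaging of Theorem \ref{thm-F-bas-1} together with Proposition \ref{prop-sum-acc}, the only genuinely new idea being that for an \emph{arbitrary} sequence $(a_n)$ satisfying (i), the tautological weight $s_n=a_n^{-1}$ manufactures a free summable filter $\F^s$ for which $(a_n)$ is automatically $(\F^s,1)$-admissible, since it forces $a_n s_n\equiv 1$. The only routine points to be careful about are that $\F^s$ is free and that $\N\in\F^*$, both of which are checked above in one line each.
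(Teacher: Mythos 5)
Your proposal is correct and follows essentially the same route as the paper: (iii)$\Rightarrow$(ii) trivially, (ii)$\Rightarrow$(i) via part (i) of Theorem \ref{thm-F-bas-nec} applied to $I=\N\in\F^*$, and (i)$\Rightarrow$(iii) by feeding the $(\F^s,1)$-admissibility of $(a_n)$ into Theorem \ref{thm-F-bas-1}. The only cosmetic difference is that you verify $(\F^s,1)$-admissibility through Proposition \ref{prop-sum-acc} (via $a_ns_n\equiv 1$), whereas the paper gets it directly from the remark after Definition \ref{defsum-fil} that $D\in(\F^s)^*$ iff $\sum_{n\in D}a_n^{-1}=\infty$; both are fine.
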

\begin{proof}
The implication (iii)$\Longrightarrow$(ii) is evident. Now, assuming (ii), we obtain the $(\F, 1)$-admissibility of  $(a_n)$ (see (i) of Theorem \ref{thm-F-bas-nec}). Since $\N \in \F^*$, this gives us our condition (i).   So, the implication  (ii)$\Longrightarrow$(i) is demonstrated. The remaining implication (i)$\Longrightarrow$(iii) follows from Theorem \ref{thm-F-bas-1} because $(a_n)$ is $(\F^s, 1)$-admissible.
\end{proof}

In our opinion, the most intriguing open problem about $\F$-bases is whether for every separable Banach space $X$ there is a free filter  $\F$ on $\N$ such that $X$ possesses an $\F$-basis. If one fixes $\F$, one gets a related problem whether for $X$ possesses an $\F$-basis for given $\F$. Historically, the corresponding problem for $\F$ being the Fr\'echet filter (that is the problem whether every separable Banach space $X$ possesses a Schauder basis) was very stimulating for the Banach space theory for decades and was solved by Enflo in 1972 \cite{enflo}. To the best of our knowledge, the problem remains open for a number of concrete filters, including the filter of statistical convergence. The next theorem provides the negative answer for a wide class of filters.

\begin{definition} \label{defslowfil}
A a free analytic filter $\mathfrak F$ on $\mathbb N$ is said to be \emph{slow} if for every $(\F, 1)$-admissible sequence of scalars $(a_n) \subset [1, \infty)$ 
$$
\inf_n \frac{a_n}{\sqrt{n}} = 0.
$$
\end{definition}

Remark, that non-trivial examples of slow filters one easily gets from Proposition \ref{prop-sum-acc}. Say, for $(s_n) = (n^{-\alpha})$ with $\alpha \in (0, 1/2)$ the corresponding summable filter $\F^{s}$ is a slow filter.

\begin{theorem}  \label{thm-slowfil}
There is a separable Banach space $X$ that has no $\F$-basis with respect to any    slow filter $\F$. 
\end{theorem}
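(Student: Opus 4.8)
The plan is to reduce the statement, by means of the results already established, to the construction of one sufficiently pathological separable space, and then to build that space from finite-dimensional blocks that are extremal for the Kadec--Snobar inequality. For the reduction: a slow filter is by definition free and analytic, hence by the positive answer to the continuity problem for analytic filters recalled in the Introduction, every $\F$-basis of an arbitrary Banach space with respect to a slow $\F$ automatically has continuous coordinate functionals; so Theorem~\ref{thm-F-bas-nec}(i) applies and the sequence $(\|S_n\|)$ of norms of its partial-sum projections is $(\F,1)$-admissible. By the very definition of a slow filter this forces $\inf_n \|S_n\|/\sqrt n = 0$. Consequently it suffices to produce a single separable Banach space $X$ with the property $(\ast)$: for every minimal system $(u_n)\subset X$ with dense linear span and continuous biorthogonal functionals the associated partial-sum projections satisfy $\|S_n\|\ge c\sqrt n$ for all $n$, with some $c=c(u_n)>0$. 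Such an $X$ has no $\F$-basis with respect to any slow filter.

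To obtain such an $X$ I would locate the source of the $\sqrt n$ in the geometry of the space. Since $S_n$ is a projection of $X$ onto the $n$-dimensional subspace $E_n=\operatorname{span}(u_1,\dots,u_n)$, one has $\|S_n\|\ge\lambda(E_n,X)$, where $\lambda(E,X):=\inf\{\|P\|:P\in L(X)\text{ is a projection of }X\text{ onto }E\}$; hence $(\ast)$ follows once $X$ is chosen so that every finite-dimensional subspace $E\subset X$ is extremal, from below, for the Kadec--Snobar inequality $\lambda(E)\le\sqrt{\dim E}$, that is $\lambda(E,X)\ge c\sqrt{\dim E}$ --- the strongest bound Kadec--Snobar permits. (In particular $X$ can have no Schauder basis, since initial segments $E_n$ of a basis would satisfy $\lambda(E_n,X)\le\|S_n\|\le K$; so $X$ is necessarily pathological.) I would build such an $X$ from finite-dimensional blocks $Z_m$ with $\dim Z_m=d_m$, where $(d_m)$ increases extremely fast and each $Z_m$ is uniformly Kadec--Snobar-extremal --- every projection of an arbitrary overspace onto a subspace of $Z_m$ of dimension at least $\tfrac12 d_m$ has norm $\gtrsim\sqrt{d_m}$ (suitable random subspaces of $\ell_\infty^N$ will do). The blocks are then glued by a norm on $\bigoplus_m Z_m$, followed by a passage to an appropriate subspace, arranged so that (a) $X$ is separable and infinite-dimensional, and (b) every finite-dimensional $E\subset X$ necessarily contains, inside the window of some single block $Z_m$ with $d_m$ comparable to $\dim E$, a subspace of dimension at least $\tfrac12 d_m$; then a projection of $X$ onto $E$ of norm much below $\sqrt{\dim E}$ would restrict to a projection of an overspace onto a proportional subspace of $Z_m$ of norm much below $\sqrt{d_m}$, contradicting extremality. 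This yields $\lambda(E,X)\ge c\sqrt{\dim E}$, hence $(\ast)$, and the reduction finishes the proof.

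The hard part will be the gluing. It has to be rigid enough that every finite-dimensional subspace of $X$ is pinned to a proportional chunk of a single block --- a plain $\ell_2$- or $c_0$-sum of the $Z_m$ is useless, as it would give $X$ a Schauder basis and ruin $(\ast)$ --- yet the Kadec--Snobar extremality of each block must survive the gluing, so that a cheap projection of $X$ onto a block-chunk still yields a cheap projection in some overspace of that chunk. Reconciling these two demands is precisely the kind of delicate finite-dimensional geometry underlying the classical constructions of Banach spaces without a basis, or without the bounded approximation property, now calibrated to the optimal rate $\sqrt n$; this is where essentially all the work of the proof lies.
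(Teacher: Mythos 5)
Your reduction is exactly the one the paper uses, and it is carried out correctly (indeed slightly more carefully than the paper, since you make explicit that a slow filter is analytic, so by the de Rancourt--Kania--Swaczyna continuity theorem the coordinate functionals are automatically continuous and Theorem~\ref{thm-F-bas-nec}(i) applies, giving $(\F,1)$-admissibility of $(\|S_n\|)$ and hence $\inf_n\|S_n\|/\sqrt n=0$). The gap is in the other half: the entire content of the theorem is the \emph{existence} of a separable space in which every finite-rank projection $P$ satisfies $\|P\|\ge \delta\sqrt{\dim P(X)}$ (equivalently, $\lambda(E,X)\ge\delta\sqrt{\dim E}$ for all finite-dimensional $E$), and you do not establish it --- you only list the desiderata for a block construction and state yourself that ``essentially all the work of the proof lies'' there. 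This is not a routine finite-dimensional gluing exercise: it is a deep known theorem of Pisier (the paper simply cites \cite[Corollary~10.8]{Pisier1986}, the space arising from Pisier's counterexample to Grothendieck's conjecture), and his construction proceeds through tensor-product and Grothendieck-inequality techniques, not through assembling Kadec--Snobar-extremal blocks.

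Moreover, the specific scheme you sketch has a concrete obstruction beyond mere incompleteness. The claim that every finite-dimensional subspace $E\subset X$ must contain a subspace of dimension $\ge\tfrac12 d_m$ lying inside a single block $Z_m$ with $d_m$ comparable to $\dim E$ cannot hold for any reasonable sum of blocks: ``diagonal'' subspaces spread across many blocks, with small intersection with each, always exist, and any norm on $\bigoplus_m Z_m$ that is rigid enough to rule them out tends to reintroduce uniformly bounded coordinate-type projections (an FDD), which would destroy property $(\ast)$ just as a plain $\ell_2$- or $c_0$-sum would. So as written the construction step is not merely unfinished but heads in a direction that is unlikely to succeed; the correct move at that point is to invoke Pisier's space, after which your argument coincides with the paper's proof.
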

\begin{proof}
We are going to apply the following result by Pisier \cite[Corollary 10.8]{Pisier1986}: there is  a  separable Banach space $X$ and a $\delta > 0$ such that for every projection $P$ in $X$ with finite-dimensional range
$$
\|P\| \ge \delta \sqrt{\dim P(X)}.
$$
Assuming that $\F$ is a slow filter and  $(u_k)_{k \in \N}$ is an $\F$-basis of this Pisier's space $X$ we easily get a contradiction. Namely, in this case  the norms $\|S_n\|$ of the partial sums operators with respect of $(u_k)_{k \in \N}$ should satisfy the condition 
$$
\|S_n\| \ge \delta \sqrt{n},
$$
but on the other hand, by Theorem \ref{thm-F-bas-nec} the sequence $\left(\|S_n\|\right)$ of norms of the corresponding partial sums operators is $(\F, 1)$-admissible, so due to Definition \ref{defslowfil},
$$
\inf_n \frac{\|S_n\|}{\sqrt{n}} = 0.
$$
\end{proof}

Remark, that one cannot construct an analogue of Pisier's space with the function $\sqrt{\cdot}$ substituted by a function of quicker growth because the classical Kadets-Snobar theorem  \cite[Theorem B]{RKS23} says that for every finite-dimensional subspace $Y$ of a Banach space $E$ there is a projection $P: E \to Y$ with $\|P\| \le \sqrt{\dim Y}$.

\section*{Acknowledgments}

The first author acknowledges support by the KAMEA program administered by the Ministry of Absorption, Israel.
The second author was supported by the Austrian Science Fund (FWF), project DOC-183.


\begin{thebibliography}{99}

\bibitem{ball1} Keith Ball, {\it The plank problem for symmetric bodies},
Invent. Math. {\bf 104} (1991), 535 - 543.

\bibitem{comp} Keith Ball, {\it The complex plank problem},
Bull. London Math. Soc. {\bf 33} (2001) no. 4, 433 - 442.

\bibitem{bay} Bayart, F. Weak-closure and polarization constant by Gaussian measure. Math. Z. 264, 459--468 (2010). 

\bibitem{conkad} J.Connor, M.Ganichev and V.Kadets. \textit{A
characterization of Banach spaces with separable duals via weak statistical
convergence.} J. Math. Anal. Appl. \textbf{244} (2000), no 1, 251 - 261.

\bibitem{shur} B. Cascales Salinas, V. Kadets, A. Leonov and A. Aviles
Lopez. \textit{The Schur $\ell_1$ theorem for filters.} Zh. Mat. Fiz. Anal.
Geom. \textbf{3}, No. 4 (2007), 383 - 398.

\bibitem{enflo} P. Enflo. \textit{A counterexample to the approximation problem in Banach spaces.} Acta Math. \textbf{130} (1973), 309--317.

\bibitem{gakad} M. Ganichev and V. Kadets. \textit{Filter Convergence in
Banach Spaces and generalized Bases } / in Taras Banakh (editor) General
Topology in Banach Spaces : NOVA Science Publishers, Huntington, New York,
2001; pp. 61 - 69.

\bibitem{GKP24} A. Glazyrin, R. Karasev, and A.  Polyanskii. \textit{Extensions of polynomial plank covering theorems.} Bull. Lond. Math. Soc. 56, No. 3, 1014--1028 (2024; Zbl 1542.46011)

\bibitem{kadsnob}  M. I. Kadets, M. G. Snobar. 
\textit{Some functionals over a compact Minkowski space}. (English) 
Math. Notes 10(1971), 694--696 (1972).
Translation from Mat. Zametki 10, 453--457 (1971).

\bibitem{kadkad} M.~Kadets, V.~Kadets.
{\it Series in Banach spaces. Conditional and unconditional convergence.}
Translated from the Russian by Andrei Iacob.
Operator Theory: Advances and Applications, 94. Birkhauser Verlag,
Basel 1997.

\bibitem{Kad-bib-ip} V.~Kadets, \textit{Bases with individual brackets and bases with individual permutations}, Teor. Funktsij, Funktsional. Anal. i Prilozheniya \textbf{49} (1988), 43--51 (Russian); translation in J. Math. Sci. \textbf{49} (1990), 1064--1069.

\bibitem{Kad-cyl} V. Kadets. \textit{Weak cluster points of a sequence and
coverings by cylinders}, Mat. Fiz. Anal. Geom., \textbf{11} (2004), No 2,
161 - 168.

\bibitem{Kad-prob} V. Kadets. \textit{$\mathcal F$-bases}, Problems presented at the conference ``Integration, Vector Measures and Related Topics IV'' Dedicated to Joe Diestel, Murcia 2011, p. 2--3. (\url{https://www.um.es/beca/Murcia2011/openproblems/openproblems.pdf}).

\bibitem{kadbook} V.~Kadets, \textit{A course in Functional Analysis and Measure Theory}. Translated from the Russian by Andrei Iacob. Universitext. Cham: Springer. xxii, 539~p. (2018).

\bibitem{kaleor}  V.~Kadets, A.~Leonov, C.~ Orhan. \textit{Weak statistical convergence and weak filter convergence for unbounded sequences}. J. Math. Anal. Appl. \textbf{371}, (2010), No. 2, 414--424. 


\bibitem{kania} Kania, T.; Swaczyna, J., \textit{Large cardinals and continuity of coordinate functionals of filter bases in Banach spaces}, Bull. Lond. Math. Soc., \textbf{53} (2021), no. 1, 231--239.

\bibitem{kochanek2011} T.~Kochanek, F-bases from countably generated filters, June 2011 (\url{https://www.um.es/beca/Murcia2011/openproblems/kochanek.pdf}).

\bibitem{kochanek2012} Kochanek, T., \textit{$\F$-bases with brackets and with individual brackets in Banach spaces}, Stud. Math., \textbf{211} (2012), 259-268.

\bibitem{Pisier1986} G. Pisier. \textit{Factorization of linear operators and geometry of Banach spaces}. Regional Conference Series in Mathematics {\bf 60}, Providence, RI: American Mathematical Society (AMS), 1986.

\bibitem{RKS23} N. de Rancourt, T.  Kania, and J.  Swaczyna
\textit{Continuity of coordinate functionals of filter bases in Banach spaces.} J. Funct. Anal. \textbf{284} (2023), No. 9, Article ID 109869, 9 p. 

\bibitem{Shkarin} S. Shkarin.  \textit{Non-sequential weak supercyclicity and hypercyclicity}. Journal of Functional Analysis \textbf{242} (2007), 37--77.

\bibitem{tod}  S. Todorcevic. \textit{Topics in topology}. Lecture Notes in
Mathematics {\bf 1652}, Springer Verlag, Berlin, 1997.

\end{thebibliography}
\end{document}